\newtheorem{theorem}{Theorem}[section]
\newtheorem{corollary}{Corollary}
\newtheorem{lemma}[theorem]{Lemma}
\newtheorem{proposition}{Proposition}
\theoremstyle{definition}
\newtheorem{definition}[theorem]{Definition}
\newtheorem{remark}{Remark}
\begin{document}

\title{Isospectral Graph Reductions}

%    author one information
\author{L. A. Bunimovich}
\address{ABC Math Program and School of Mathematics, Georgia Institute of Technology, 686 Cherry Street, Atlanta, GA 30332}
\email{bunimovich@math.gatech.edu}

%    author one information
\author{B. Z. Webb}
\address{ABC Math Program and School of Mathematics, Georgia Institute of Technology, 686 Cherry Street, Atlanta, GA 30332}
\email{bwebb@math.gatech.edu}

\keywords{Graph Reduction, Spectrum, Branches, Networks}

\maketitle

\begin{abstract}
Let G be an arbitrary finite weighted digraph with weights in the set of
complex rational functions. A general procedure is proposed which allows for the reduction of G to a smaller graph with a less complicated structure having the same  spectrum as of G (up to some set known in advance). The proposed procedure has a lot of flexibility and could be used e.g. for design of networks with prescribed spectral and dynamical properties. 
\end{abstract}

\section{Introduction} 
The structure of a given graph can range in complexity from being quite simple, having some regular features or small edge and vertex sets to being extremely complicated where basic characteristics of the graph are hard to obtain or estimate. Such complicated structure is typical if for instance the graph represents some real network \cite{Albert02,Dorogovtsev03,Faloutsos99,Newman06,Strogatz03,Watts99}. 

An important problem therefore is whether it is possible to simplify or reduce a graph while maintaining its basic graph structure as well as some characteristic(s) of the original graph. A related key question then is which characteristic(s) to conserve while reducing a graph.

Studies of dynamical networks (i.e. networks of interacting dynamical systems which could be cells, power stations, etc.) reveal that an important characteristic of a network's structure is the spectrum of the network's adjacency matrix \cite{Blank06,Newman06,Afriamovich07,Motter07}. With this in mind we present an approach which allows for the reduction of a general weighted digraph in such a way that the spectrum of the graph's (weighted) adjacency matrix is maintained up to some well defined and known set. 

We denote the class of graphs for which reductions are possible by $\mathbb{G}$ which consists of the set of all finite weighted digraphs without parallel edges but possibly with loops having weights in the set $\mathbb{W}$ of complex rational functions. A graph $G\in\mathbb{G}$ can therefore be written as the triple $G=(V,E,\omega)$ where $V$ and $E$ are the vertex and edge sets of $G$ respectively and $\omega:E\rightarrow\mathbb{W}$. Each such graph has an adjacency matrix with a well defined spectrum which we denote by $\sigma(G)\subset\mathbb{C}$. With this in place a graph reduction of $G$ can be described as follows. 

Given a specific subset $S\subseteq V$ which we call a \textit{structural sets} of $G$, an \textit{isospectral reduction} of $G$ over the vertex set $S$ is a weighted digraph $\mathcal{R}_S(G)=(S,\mathcal{E},\mu)$ (see section 3 for the exact definitions). The main result of the paper is the following theorem (see theorem \ref{theorem1}).\\

\noindent \textbf{Theorem:} Let $G\in\mathbb{G}$ and $S$ be a structural set of $G$. Then $\sigma(G)$ and $\sigma(\mathcal{R}_S(G))$ differ at most by $\mathcal{N}(G;S)$.\\ 

\noindent The set $\mathcal{N}(G;S)$ is a finite set of complex numbers which is known and is the largest set by which $\sigma(G)$ and $\sigma(\mathcal{R}_S(G))$ can differ.

As a typical graph has many different structural sets it is possible to consider different isospectral reductions of the same graph. Moreover, since a reduced graph is again a weighted digraph it is possible to consider sequences of such reductions. The flexibility of this process is reflected in the fact that for a typical graph $G\in\mathbb{G}$ it is possible to reduce $G$ to a graph on any nonempty subset of its original vertex set. That is, we may simplify the structure of $G$ to whatever degree we desire.

From this it follows that if $\mathcal{V}$ is any nonempty subset of the vertices of $G$ then there are typically multiple ways to sequentially reduce $G$ to a graph on $\mathcal{V}$. As it turns out each such reduction results in the same graph independent of the particular sequence. This uniqueness result can be interpreted as the property that sequential reductions on $\mathbb{G}$ are commutative.

Furthermore, the class of graphs which can be reduced via this method is very general. Specifically, we may reduce those graphs in $\mathbb{G}$ which consist of weighted digraphs without parallel edges. Since undirected and unweighted graphs or graphs with parallel edges can be considered as weighted digraphs via some standard conventions then such graphs are automatically included as special cases of graphs that may be reduced by this procedure.

Because of the flexibility in reducing a graph the relation of having the same branch reduction is not an equivalence relation on the set $\mathbb{G}$ as this relation is not transitive. However, it is possible to construct specific types of structural sets as well as rules for sequential reductions which do induce equivalence relations on the graphs in $\mathbb{G}$ which we give examples of.

Also we note that generally, the tradeoff in reducing a graph is that although the graph structure becomes simpler the weights of edges become more complicated. Therefore, we also consider graph reductions over fixed weight sets in which the weight set of the graph is maintained under this reduction while the vertex set is reduced. 

The structure of the paper is as follows. In Sect. 2 we present notation and some general definitions. Sect. 3 contains the description of the reduction procedure as well as some results on sequences of such reductions. Some of the main results and techniques of the paper are then stated in Sect. 4. Proofs of these statements are given in Sect.5. In Sect. 6 we study the relations between the strongly connected components of the graph and its reductions. Sect. 7 considers reductions over fixed weight sets and Sect. 8 contains some concluding remarks.

\section{Preliminaries}

In what follows we formally consider the class of digraphs consisting of all finite weighted digraphs with or without loops having edge weights in the set $\mathbb{W}$ of complex rational functions described below. We denote this class of graphs by $\mathbb{G}$.

As previously mentioned, graphs or which are either undirected, unweighted, or have parallel edges can be considered as graphs in $\mathbb{G}$. This is done by making an undirected graph $G$ into a directed graph by orienting each of its edges in both directions. Similarly, if $G$ is unweighted then it can be made weighted by giving each edge unit weight. Also multiple edges between two vertices of $G$ may be considered as a single edge by adding the weights of the multiple edges and setting this to be the weight of this single equivalent edge. We will typically assume that the graph $G\in\mathbb{G}$ or use these conventions to make it so. 

By way of notation we let the digraph $G$, possibly with loops, be the pair $(V,E)$ where $V$ and $E$ are the finite sets denoting the \textit{vertices} and \textit{edges} of $G$ respectively, the edges corresponding to ordered pairs $(v,w)$ for $v,w\in V$. Furthermore, if $G$ is a \textit{weighted digraph} with weights in $\mathbb{W}$ then $G=(V,E)$ together with a function $\omega:E\rightarrow\mathbb{W}$ where $\omega(e)$ is the \textit{weight} of the edge $e$ for $e\in E$. We use the convention that $\omega(e)=0$ if and only if $e\notin E$. Importantly, if $G\in\mathbb{G}$ then similar to digraphs we will denote this by writing $G=(V,E,\omega)$. 

In order to describe the set of weights $\mathbb{W}$ let $\mathbb{C}[\lambda]$ denote the set of polynomials in the single complex variable $\lambda$ with complex coefficients. We define the set $\mathbb{W}$ to be the set of rational functions of the form $p/q$ where $p,q\in\mathbb{C}[\lambda]$ such that $p$ and $q$ have no common factors and $q$ is nonzero.

The set $\mathbb{W}$ is then a field under addition and multiplication with the convention that common factors are removed when two elements are combined. That is, if $p/q,r/s\in\mathbb{W}$ then $p/q+r/s=(ps+rq)/(qs)$ where the common factors of $ps+rq$ and $qs$ are removed. Similarly, in the product $(pr)/(qs)$ of $p/q$ and $r/s$ the common factors of $pr$ and $qs$ are removed. However, we may at times leave sums and the products of sums of elements in $\mathbb{W}$ uncombined and therefore possibly unreduced but this is purely cosmetic since there is one reduced form for any element in $\mathbb{W}$.

To introduce the spectrum associated to a graph having weights in $\mathbb{W}$ we will use the following notation. If the vertex set of the graph $G=(V,E,\omega)$ is labeled $V=\{v_1,\dots,v_n\}$ then we denote the edge $(v_i,v_j)$ by $e_{ij}$. The matrix $M(G)=M(G,\lambda)$ defined entrywise by $$\big(M(G)\big)_{ij}=\omega(e_{ij})$$
is the \textit{weighted adjacency matrix} of $G$.

We let the \textit{spectrum} of a matrix $A=A(\lambda)$ with entries in $\mathbb{W}$ be the solutions including multiplicities of the equation 
\begin{equation}\label{eq1}
\det(A(\lambda)-\lambda I)=0
\end{equation}
and for the graph $G$ we let $\sigma(G)$ denote the spectrum of $M(G)$. The spectrum of a matrix with entries in $\mathbb{W}$ is therefore a generalization of the spectrum of a matrix with complex entries. 

Moreover, the spectrum is a \textit{list} of numbers. That is, 
$$\sigma(G)=\big\{\ (\sigma_i,n_i):1\leq i\leq p,\sigma_i\in\mathbb{C},n_i\in\mathbb{N}\big\}$$
where $n_i$ is the multiplicity of the solutions $\sigma_i$ to equation (\ref{eq1}), $p$ the number of distinct solutions, and $(\sigma_i,n_i)$ the elements in the list. In what follows we may write a list as a set with multiplicities if this is more convenient. 

\section{Graph Reductions}

In this section we describe the main results of the paper. That is, we present a method which allows for the reduction a graph while maintaining the graph's spectrum up to some known set. We also give specific examples of this process notably using this method to reduce graphs associated with the Laplacian matrix of a graph. Some natural consequences and extensions of this process are also mentioned. 

\subsection{Setup} 
Here we first introduce some definitions as well as some terminology that allow us to be precise in our formulation of an isospectral reduction. 

In the following if $S\subseteq V$ where $V$ is the vertex set of a digraph let $\bar{S}$ denote the complement of $S$ in $V$. Also, as is standard, a \textit{path} $P$ in a digraph $G=(V,E)$ is a sequence of distinct vertices $v_1,\dots,v_m\in V$ such that $(v_i,v_{i+1})\in E$ for $1\leq i\leq m-1$ and in the case that the vetices $v_1,\dots,v_m$ are distinct, except that $v_1=v_m$, $P$ is a \textit{cycle}. Moreover, let the vertices $v_2,\dots,v_{m-1}$ of $P$ be the \textit{interior} vertices of $P$.

\begin{definition}\label{def1}
For $G=(V,E)$ let $\ell(G)$ be the digraph $G$ with all loops removed. We say the nonempty vertex set $S\subseteq V$ is a \textit{structural set} of $G$ if $\bar{S}$ induces no cycles in $\ell(G)$ and for each $v_i\in\bar{S}$, $\omega(e_{ii}\neq \lambda)$. We denote by $st(G)$ the set of all structural sets of $G$.
\end{definition}

\begin{figure}
  \begin{center}
    \begin{overpic}[scale=.5]{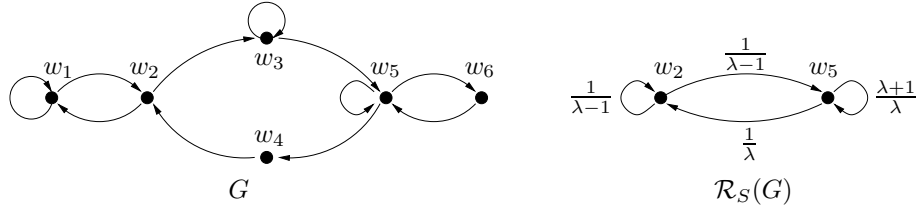}
    \put(25.5,-4){$G$}
    \put(4,10.5){$w_1$}
    \put(14,10.5){$w_2$}
    \put(75,10.5){$w_2$}
    \put(28.5,11.5){$w_3$}
    \put(28.5,2.5){$w_4$}
    \put(42,10.5){$w_5$}
    \put(93,10.5){$w_5$}
    \put(53,10.5){$w_6$}
    \put(83,12){{$\frac{1}{\lambda-1}$}}
    \put(85,1.5){$\frac{1}{\lambda}$}
    \put(65,7){$\frac{1}{\lambda-1}$}
    \put(100.5,7){$\frac{\lambda+1}{\lambda}$}
    \put(82,-4){$\mathcal{R}_S(G)$}
    \end{overpic}
  \end{center}
  \caption{Reduction of $G$ over $S=\{w_2,w_5\}$.}
\end{figure}

\begin{definition}
For $G=(V,E)$ with $S=\{v_1,\dots,v_m\}\in st(G)$ let $\mathcal{B}_{ij}(G;S)$ be the set of paths or cycles from $v_i$ to $v_j$ in $G$ having no interior vertices in $S$. Furthermore, let $$\mathcal{B}_S(G)=\bigcup_{1\leq i,j \leq m} \mathcal{B}_{ij}(G;S).$$ We call the set $\mathcal{B}_S(G)$ the set of all \textit{branches} of $G$ with respect to $S$. 
\end{definition}

\begin{definition}\label{branchprod}
Let $G=(V,E,\omega)$ and $\beta \in \mathcal{B}_S(G)$ for some $S\in st(G)$. If $\beta=v_1,\dots,v_m$ for $m>2$ we define $$\mathcal{P}_{\omega}(\beta)=\omega(e_{12})\prod_{i=2}^{m-1}\frac{\omega(e_{i,i+1})}{\lambda-\omega(e_{ii})}$$ as the \textit{branch product} of $\beta$. If $m=2$ we define $\mathcal{P}_{\omega}(\beta)=\omega(e_{12})$.
\end{definition}

\begin{definition}\label{reductiondef}
Let $G=(V,E,\omega)$ with structural set $S=\{v_1\,\dots,v_m\}$. Define $\mathcal{R}_S(G)=(S,\mathcal{E},\mu)$ to be the digraph such that $e_{ij}\in \mathcal{E}$ if $\mathcal{B}_{ij}(G;S)\neq\emptyset$ and 
\begin{equation}
\mu(e_{ij})=
  \sum_{\beta\in\mathcal{B}_{ij}(G;S)}\mathcal{P}_\omega(\beta), \ \ 1\leq i,j\leq m.
\end{equation}
We call $\mathcal{R}_S(G)$ the \textit{isospectral reduction} of $G$ over $S$.
\end{definition} The graph $\mathcal{R}_S(G)\in\mathbb{G}$ since $\mathcal{P}_\omega(\beta)\in\mathbb{W}$ implying $\mu(e_{ij})$ is as well. Figure 1 gives an example of a reduction of the graph $G$. We note here that all figures in this paper follow the aforementioned conventions that undirected and unweighted edges are assumed to be oriented in both directions and have unit weight.

As another example of a graph reduction consider the complete undirected unweighted graph without loops $K_n=(V,E)$ on $n$ vertices. If $V=\{v_1,\dots,v_n\}$ then $K_n$ has $n$ structural sets $S_k$ each given by $S_k=V\setminus\{v_k\}$, $1\leq k\leq n$. For each $k$ the graph $\mathcal{R}_{S_k}(K_n)$ has an $(n-1) \times (n-1)$ adjacency matrix $\mathcal{M}$ where $(\mathcal{M})_{ij}=1+1/\lambda$ for all $1\leq i,j\leq n-1$. 

For the complete bipartite graph $K_{m,n}=(V,E)$ where $V$ is partitioned into the sets $M$ and $N$ having $m$ and $n$ vertices respectively it follows that both $M,N\in st(K_{m,n})$. Moreover, $\mathcal{R}_M(K_{m,n})$ is the digraph with all possible edges including loops on $m$ vertices each having weight $n/\lambda$ (see figure 2). 

In order to understand the extent to which the spectrum of a graph is maintained under different reductions we introduce the following. If $S$ is a structural set of the graph $G=(V,E,\omega)$ where $V=\{v_1,\dots,v_n\}$ let $$\mathcal{N}(G;S)=\bigcup_{v_i\in \bar{S}}\{\lambda\in\mathbb{C}:\lambda=\omega(e_{ii}) \ \text{or} \ \omega(e_{ii}) \ \text{is undefined}\}.$$ 
That is, $\mathcal{N}(G;S)$ is the set of $\lambda\in\mathbb{C}$ for which there is some vertex $v_i$ off the structural set $S$ where $\omega(e_{ii})=\lambda$ or, as $\omega(e_{ii})=p_i(\lambda)/q_i(\lambda)\in\mathbb{W}$, the values of $\lambda$ at which $q_i(\lambda)=0$. As an example for the graph $G$ and structural set $S$ in figure 1 $\mathcal{N}(G;S)=\{0,1\}$. 

\begin{figure}
  \begin{center}
    \begin{overpic}[width=80mm, height=30mm]{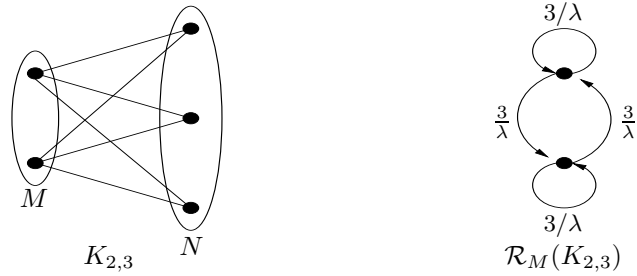}
    \put(12,-5.5){$K_{2,3}$}
    \put(1.5,4){$M$}
    \put(28,-4){$N$}
    \put(82,-5.5){$\mathcal{R}_M(K_{2,3})$}
    \put(80,17){$\frac{3}{\lambda}$}
    \put(101,17){$\frac{3}{\lambda}$}
    \put(88.5,0){\smaller$3/\lambda$}
    \put(88.5,35){\smaller$3/\lambda$}
    \end{overpic}
  \end{center}
  \caption{Reduction of $K_{2,3}$ over $M$.}
\end{figure}

If $G,H\in\mathbb{G}$ and $N\subseteq\mathbb{C}$ then let $\sigma(G)\setminus N$ be the list given by 
$$\sigma(G)\setminus N=\big\{ (\sigma_i,n_i)\in\sigma(G):\sigma_i\notin N \big\}.$$ 
Moreover, if it happens that $\sigma(G)\setminus N=\sigma(H)\setminus N$ then we say $\sigma(G)$ and $\sigma(H)$ differ at most by $N$. The main result of this paper can then be phrased as follows.

\begin{theorem}\label{theorem1}
Let $G\in\mathbb{G}$ with $S\in st(G)$. Then $\sigma(G)$ and $\sigma(\mathcal{R}_S(G))$ differ at most by $\mathcal{N}(G;S)$.
\end{theorem}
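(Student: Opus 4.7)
The plan is to factor $\det(M(G)-\lambda I)$ via a Schur complement whose form matches the reduction. Order $V=\{v_1,\dots,v_n\}$ so that $S=\{v_1,\dots,v_m\}$ and partition
\[
M(G)-\lambda I=\begin{pmatrix}A-\lambda I_m & B\\ C & D-\lambda I_{n-m}\end{pmatrix},
\]
with $A,D$ the diagonal blocks on $S$ and $\bar S$ respectively. Since $\bar S$ induces no cycles in $\ell(G)$, I first reorder $\bar S$ topologically so that $D=D_0+N$ with $D_0=\mathrm{diag}(\omega(e_{ii}))_{v_i\in\bar S}$ and $N$ strictly triangular. Then $D-\lambda I$ is triangular with diagonal entries $\omega(e_{ii})-\lambda$, which are nonzero in the field $\mathbb{W}$, so $D-\lambda I$ is invertible over $\mathbb{W}$ and the Schur identity yields
\[
\det(M(G)-\lambda I)=\det(D-\lambda I)\cdot\det\bigl((A-\lambda I_m)-B(D-\lambda I)^{-1}C\bigr).
\]

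The crux is to recognize the second factor as $\det(M(\mathcal{R}_S(G))-\lambda I_m)$. Writing $F=\lambda I-D_0$,
\[
-(D-\lambda I)^{-1}=(F-N)^{-1}=\sum_{k\geq 0}(F^{-1}N)^{k}F^{-1}
\]
is a finite sum by nilpotence of $F^{-1}N$, and the $(u_0,u_k)$-entry of the $k$-th summand equals the sum over directed walks $u_0\to u_1\to\cdots\to u_k$ in $\bar S$ of
\[
\prod_{r=0}^{k-1}\frac{\omega(e_{u_r u_{r+1}})}{\lambda-\omega(e_{u_r u_r})}\cdot\frac{1}{\lambda-\omega(e_{u_k u_k})}.
\]
The topological order forces each such walk to be a path. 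Multiplying by $B$ on the left and $C$ on the right prepends an edge from some $v_i\in S$ and appends an edge to some $v_j\in S$, so the $(i,j)$-entry of $-B(D-\lambda I)^{-1}C$ is exactly $\sum_\beta\mathcal{P}_\omega(\beta)$ summed over those $\beta\in\mathcal{B}_{ij}(G;S)$ whose interior lies in $\bar S$, by Definition~\ref{branchprod}. Adding the $A$-block entry $\omega(e_{ij})$ captures the length-$2$ branch (if present), and one arrives at
\[
\det(M(G)-\lambda I)=\Bigl(\prod_{v_i\in\bar S}(\omega(e_{ii})-\lambda)\Bigr)\det\bigl(M(\mathcal{R}_S(G))-\lambda I_m\bigr).
\]

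The spectral comparison is then immediate. The prefactor $\prod_{v_i\in\bar S}(\omega(e_{ii})-\lambda)$ vanishes precisely at values $\lambda=\omega(e_{ii})$ with $v_i\in\bar S$ and has poles only at values where some such $\omega(e_{ii})$ is undefined, so its zeros and poles are contained in $\mathcal{N}(G;S)$. For $\lambda_0\notin\mathcal{N}(G;S)$ the prefactor is a finite nonzero complex number, hence the multiplicity of $\lambda_0$ as a root of $\det(M(G)-\lambda I)$ equals its multiplicity as a root of $\det(M(\mathcal{R}_S(G))-\lambda I_m)$, giving the claim. The main obstacle I anticipate is the bookkeeping in the middle step: verifying that the formal path expansion of $B(D-\lambda I)^{-1}C$ enumerates each branch with interior in $\bar S$ exactly once with the correct branch product, and that all boundary cases (the $k=0$ summand, the length-$2$ branches absorbed by $A$, and the cycle case $i=j$) are handled. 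The hypothesis that $\bar S$ induces no cycles in $\ell(G)$ is what makes $N$ nilpotent, so that the geometric series terminates and the enumerated walks are genuine paths rather than walks revisiting interior vertices.
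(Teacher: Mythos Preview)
Your proof is correct and takes a genuinely different, more algebraic route than the paper's. The paper proceeds constructively and graph-theoretically: it first establishes a \emph{branch expansion} lemma (Lemma~\ref{lemma0}) showing that $G$ can be replaced, without changing the spectrum outside $\mathcal{N}(G;S)$, by a graph in which all branches of $\mathcal{B}_S(G)$ are mutually independent, and a \emph{loop bisection} lemma (Lemma~\ref{lemma2}) showing that an edge of weight $\omega_{ij}\omega_{jk}/(\lambda-\omega_{jj})$ may be replaced by a two-edge path through a new looped vertex with the same spectral effect. These two operations, applied repeatedly (expansion first, then repeated inverse loop-bisection to contract each independent branch to a single edge), convert $G$ into $\mathcal{R}_S(G)$ step by step; the lemmas themselves are proved by explicit cofactor expansions and an ad hoc branch-separation construction on the adjacency matrix. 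Your Schur-complement argument collapses this entire program into a single identity, recognizing $M(\mathcal{R}_S(G))-\lambda I$ as the Schur complement of the $\bar S$-block and letting the topological order on $\bar S$ (which exists precisely because $\bar S$ induces no cycles in $\ell(G)$) do the work that the branch expansion does in the paper. What the paper's approach buys is an explicit graph-theoretic interpretation of the intermediate objects---branch expansions and decompositions reappear in Section~4 and in the discussion of common reductions---whereas your argument is self-contained, shorter, and exposes the theorem as a standard block-elimination identity over the field $\mathbb{W}$.
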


That is, the spectrum of $G$ and the spectrum of its reduction $\mathcal{R}_S(G)$ differ at most by elements of $\mathcal{N}(G;S)$ which justifies our use of the terminology isospectral reduction as the spectrum is preserved up to some known set.

We note that two weighted digraphs $G_1=(V_1,E_1,\omega_1)$, and $G_2=(V_2,E_2,\omega_2)$ are \textit{isomorphic} if there is a bijection $\rho:V_1\rightarrow V_2$ such that there is an edge $e_{ij}$ in $G_1$ from $v_i$ to $v_j$ if and only if there is an edge $\tilde{e}_{ij}$ between $\rho(v_i)$ and $\rho(v_j)$ in $G_2$ with $\omega_2(\tilde{e}_{ij})=\omega_1(e_{ij})$. If the map $\rho$ exists it is called an \textit{isomorphism} and we write $G_1\simeq G_2$.

\begin{definition} Let $G,H\in\mathbb{G}$. We say $G$ and $H$ have a reduction in common via the structural sets $S$ and $T$ respectively if $S\in st(G)$, $T\in st(H)$ and $\mathcal{R}_S(G)\simeq\mathcal{R}_T(H)$. 
\end{definition}

The fact that isomorphic graphs have the same spectrum with theorem \ref{theorem1} together imply the following. 

\begin{corollary}
If $G,H\in\mathbb{G}$ have a reduction in common via the structural sets $S$ and $T$ respectively then $\sigma(G)$ and $\sigma(H)$ differ at most by $\mathcal{N}(G;S)\cup\mathcal{N}(H;T)$.
\end{corollary}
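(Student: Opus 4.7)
The plan is to chain together two applications of Theorem \ref{theorem1} through the isomorphism between the common reduction. First, I would apply Theorem \ref{theorem1} to the pair $(G,S)$ and separately to $(H,T)$, obtaining
\begin{equation*}
\sigma(G)\setminus \mathcal{N}(G;S)=\sigma(\mathcal{R}_S(G))\setminus \mathcal{N}(G;S)
\quad\text{and}\quad
\sigma(H)\setminus \mathcal{N}(H;T)=\sigma(\mathcal{R}_T(H))\setminus \mathcal{N}(H;T).
\end{equation*}

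Next, I would use the hypothesis $\mathcal{R}_S(G)\simeq\mathcal{R}_T(H)$ to conclude $\sigma(\mathcal{R}_S(G))=\sigma(\mathcal{R}_T(H))$. This step is standard: an isomorphism $\rho$ induces a permutation matrix $P$ with $M(\mathcal{R}_T(H))=P\,M(\mathcal{R}_S(G))\,P^{-1}$, and since conjugation preserves the determinant we get $\det(M(\mathcal{R}_S(G))-\lambda I)=\det(M(\mathcal{R}_T(H))-\lambda I)$ as elements of $\mathbb{W}$, so the two spectra (counted as lists with multiplicity as defined in Sect.~2) coincide.

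Finally, setting $N=\mathcal{N}(G;S)\cup\mathcal{N}(H;T)$, I would pass from the equalities above (which remove only a subset of $N$) to equalities that remove all of $N$. This is the set-theoretic identity $(L\setminus A)\setminus B = L\setminus(A\cup B)$ applied to the list $\sigma(G)$ with $A=\mathcal{N}(G;S)$, $B=\mathcal{N}(H;T)$, and analogously for $\sigma(H)$; this identity respects multiplicities because, by the definition given in Sect.~2, $\sigma(\cdot)\setminus N$ retains or deletes each entry $(\sigma_i,n_i)$ as an indivisible unit according to whether $\sigma_i\in N$. Thus
\begin{equation*}
\sigma(G)\setminus N \;=\; \sigma(\mathcal{R}_S(G))\setminus N \;=\; \sigma(\mathcal{R}_T(H))\setminus N \;=\; \sigma(H)\setminus N,
\end{equation*}
which is exactly the statement that $\sigma(G)$ and $\sigma(H)$ differ at most by $N$.

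There is no genuine obstacle in this argument; the only point that needs a sentence of care is confirming that enlarging the excluded set from $\mathcal{N}(G;S)$ to $N$ preserves the equalities furnished by Theorem \ref{theorem1}, which the identity above handles. Everything else is a direct citation of Theorem \ref{theorem1} and the invariance of the spectrum under graph isomorphism.
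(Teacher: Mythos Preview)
Your argument is correct and matches the paper's own justification: the corollary is stated immediately after the remark that isomorphic graphs have the same spectrum, and the paper simply notes that this fact together with Theorem~\ref{theorem1} implies the result. Your write-up just makes explicit the two ingredients (spectral invariance under isomorphism, and enlarging the excluded set to $\mathcal{N}(G;S)\cup\mathcal{N}(H;T)$) that the paper leaves implicit.
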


Figure 3 gives an example of a reduction of $H$ in which $\mathcal{R}_T(H)\simeq\mathcal{R}_S(G)$ where $G$ and $S$ are the graph and structural set respectively in figure 1. Therefore, the adjacency matrices of $G$ and $H$ in figures 1 and 3 respectively have the same spectrum up to $\mathcal{N}(G;S)\cup\mathcal{N}(H;T)=\{0,1\}$. In this case one can compute $\sigma(G)=\{2,-1,1,1,0,0\}$, $\sigma(H)=\{2,-1,1,0\}$, and $\sigma(\mathcal{R}_S(G))=\sigma(\mathcal{R}_T(H))=\{2,-1\}$ 

\begin{figure}
  \begin{center}
    \begin{overpic}[scale=.5]{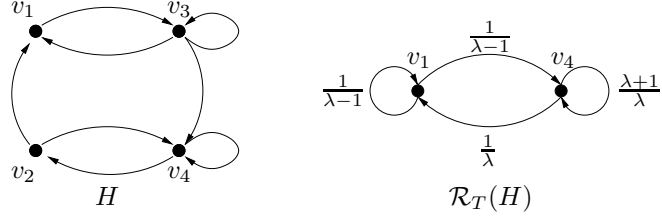}
    \put(14,-5){$H$}
    \put(0,-.5){$v_2$}
    \put(0,27){$v_1$}
    \put(26,27){$v_3$}
    \put(26,-0.5){$v_4$}
    \put(73,-5){$\mathcal{R}_T(H)$}
    \put(51.5,13){$\frac{1}{\lambda-1}$}
    \put(66,18.5){$v_1$}
    \put(76,22){$\frac{1}{\lambda-1}$}
    \put(78,3){$\frac{1}{\lambda}$}
    \put(90,18.5){$v_4$}
    \put(101,13){$\frac{\lambda+1}{\lambda}$}
    \end{overpic}
  \end{center}
  \caption{$\mathcal{R}_T(H)\simeq\mathcal{R}_S(G)$ from Fig. 1}
\end{figure}

We note that the representation of matrices with nonnegative entries by smaller matrices with polynomial entries has been used before (see e.g. \cite{Boyle02}). However, the reason for doing so is different from our motivation in this paper. 

\subsection{Laplacian Matrices}
An alternate view of the graph reductions presented in section 3.1 is to consider the reduction process one in which the matrix $M(G)$ is reduced to the matrix $M(\mathcal{R}_S(G))$ and view theorem \ref{theorem1} as a theorem about matrix reductions. Viewed this way, an important application of theorem \ref{theorem1} is that one may reduce not only the graph $G$ but also the graphs associated with both the combinatorial Laplacian matrix and the normalized Laplacian matrix of $G$. 

To make this precise, let $G=(V,E)$ be an unweighted undirected graph without loops, i.e. a \textit{simple graph}. If $G$ has vertex set $V=\{v_1,\dots,v_n\}$ and $d(v_i)$ is the degree of vertex $v_i$ then its \textit{combinatorial Laplacian matrix} $M_L(G)$ is given by 
$$M_L(G)_{ij}=\begin{cases}
d(v_i) & i=j\\
-1 & i\neq j \ \text{and} \ v_i \ \text{is adjacent to} \ v_j\\
0 & \text{otherwise}
\end{cases}$$ On the other hand the \textit{normalized Laplacian matrix} $M_\mathcal{L}(G)$ of $G$ is defined as
$$M_\mathcal{L}(G)_{ij}=\begin{cases}
1 & i=j \ \text{and} \ d(v_j)\neq 0\\
\frac{-1}{\sqrt{d(v_i)d(v_j)}} & v_i \ \text{is adjacent to} \ v_j\\
0 & \text{otherwise}
\end{cases}$$ 

The interest in the eigenvalues of $M_L(G)$ is that $\sigma(M_L(G))$ gives structural information about $G$ (see \cite{Chung97}). On the other hand knowing $\sigma(M_\mathcal{L}(G))$ is useful in determining the behavior of algorithms on the graph $G$ among other things (see \cite{Chung06}).

As every $n\times n$ matrix with weights in $\mathbb{W}$ has a unique weighted digraph associated to it then let $L(G)$ be the graph with adjacency matrix $M_L(G)$ and similarly let $\mathcal{L}(G)$ be the graph with adjacency matrix $M_\mathcal{L}(G)$. Since both $L(G)$ and $\mathcal{L}(G)$ can be considered in $\mathbb{G}$ via our conventions then either may be reduced. We summarize this as the following theorem which is a corollary to theorem \ref{theorem1}.

\begin{theorem}
Suppose $G$ is a simple graph with vertex set $V$. If $S\subseteq V$ is such that $\bar{S}$ induces no cycles in $G$ then $S\in st(L(G))$ and $\sigma(L(G))$ and $\sigma(\mathcal{R}_S(L(G)))$ differ at most by $\mathcal{N}(L(G);S)$. Similarly, $S\in st(\mathcal{L}(G))$ and $\sigma(\mathcal{L}(G))$ and $\sigma(\mathcal{R}_S(\mathcal{L}(G)))$ differ at most by $\mathcal{N}(\mathcal{L}(G);S)$. 
\end{theorem}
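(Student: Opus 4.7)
The plan is to derive this directly from Theorem \ref{theorem1} by verifying that the hypothesis ``$\bar{S}$ induces no cycles in $G$'' is exactly what is needed to guarantee that $S$ is a structural set of the weighted digraphs $L(G)$ and $\mathcal{L}(G)$ in the sense of Definition \ref{def1}. Once membership in $st(L(G))$ and $st(\mathcal{L}(G))$ is established, the spectral conclusion is immediate from Theorem \ref{theorem1} applied to each of these two graphs.

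First I would unpack what $L(G)$ and $\mathcal{L}(G)$ look like as elements of $\mathbb{G}$. Following the conventions laid out in Section 2, the undirected simple graph $G$ is realized in $\mathbb{G}$ by orienting each edge in both directions. The combinatorial Laplacian graph $L(G)$ then has the same underlying oriented edges as $G$, each carrying weight $-1$, together with a loop $e_{ii}$ at every vertex $v_i$ of weight $d(v_i)\in\mathbb{C}$. Likewise, $\mathcal{L}(G)$ has the same oriented non-loop edges as $G$, each of weight $-1/\sqrt{d(v_i)d(v_j)}$, plus a loop at every vertex $v_i$ with $d(v_i)\ne 0$ of weight $1$ (and no loop otherwise). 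In both cases the loop-free skeletons $\ell(L(G))$ and $\ell(\mathcal{L}(G))$ coincide, as digraphs, with the doubly-oriented version of $G$.

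Next I would check the two clauses of Definition \ref{def1} for $S$ in each of $L(G)$ and $\mathcal{L}(G)$. For the cycle condition, a cycle in $\ell(L(G))$ restricted to $\bar{S}$ is, after suppressing orientations, a cycle of $G$ whose vertices lie in $\bar{S}$, i.e.\ a cycle in the induced subgraph $G[\bar{S}]$; by hypothesis no such cycle exists, and the same argument handles $\ell(\mathcal{L}(G))$. For the loop-weight condition, every loop weight $\omega(e_{ii})$ in $L(G)$ is the constant $d(v_i)\in\mathbb{C}$ and every loop weight in $\mathcal{L}(G)$ is either $1$ or absent, and in particular is never equal to the indeterminate $\lambda\in\mathbb{W}$. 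Hence $S\in st(L(G))$ and $S\in st(\mathcal{L}(G))$.

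With structuralness in hand, Theorem \ref{theorem1} applied to $L(G)$ yields that $\sigma(L(G))$ and $\sigma(\mathcal{R}_S(L(G)))$ differ at most by $\mathcal{N}(L(G);S)$, and applied to $\mathcal{L}(G)$ yields the analogous statement for $\mathcal{L}(G)$; this is exactly the conclusion of the theorem. The only real obstacle is the bookkeeping of the first paragraph: one must be careful that the conventions which turn an undirected simple graph into an element of $\mathbb{G}$ (double orientation, unit/non-unit weights, loop weights drawn from $\mathbb{C}\subset\mathbb{W}$) are applied consistently to both $L(G)$ and $\mathcal{L}(G)$ so that their loop-free skeletons really do inherit the acyclicity of $G[\bar{S}]$ and their loop weights really are scalars rather than functions of $\lambda$. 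Beyond this verification the result is a direct corollary.
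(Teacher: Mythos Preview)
Your approach is exactly what the paper does: it states the result as an immediate corollary of Theorem~\ref{theorem1}, with the only content being the verification that $S\in st(L(G))$ and $S\in st(\mathcal{L}(G))$. One small point to tighten: when you write that a cycle in $\ell(L(G))|_{\bar S}$ becomes, ``after suppressing orientations, a cycle of $G$'', remember that under the paper's convention each undirected edge of $G$ becomes a pair of opposite directed edges, so a single edge in $G[\bar S]$ already yields a directed $2$-cycle in $\ell(L(G))$; the hypothesis ``$\bar S$ induces no cycles in $G$'' must therefore be read in the digraph sense established in Section~2 (equivalently, $\bar S$ is an independent set), not merely as $G[\bar S]$ being a forest.
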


For example if $G=K_3$ is the complete graph on 3 vertices then the graph $L(G)$, shown in figure 4, has the structural set $S=\{v_1,v_2\}$ since $\bar{S}=\{v_3\}$ induces no cycles in $G$. Reducing over this set yields $\mathcal{R}_S\big(L(G)\big)$ where, as can be verified, $\mathcal{N}(L(G),S)=2\notin\sigma(L(G))$ implying $\sigma\big(\mathcal{R}_{S}(L(G))\big)=\sigma(L(G))$. 

\begin{figure}
  \begin{center}
    \begin{overpic}[scale=.5]{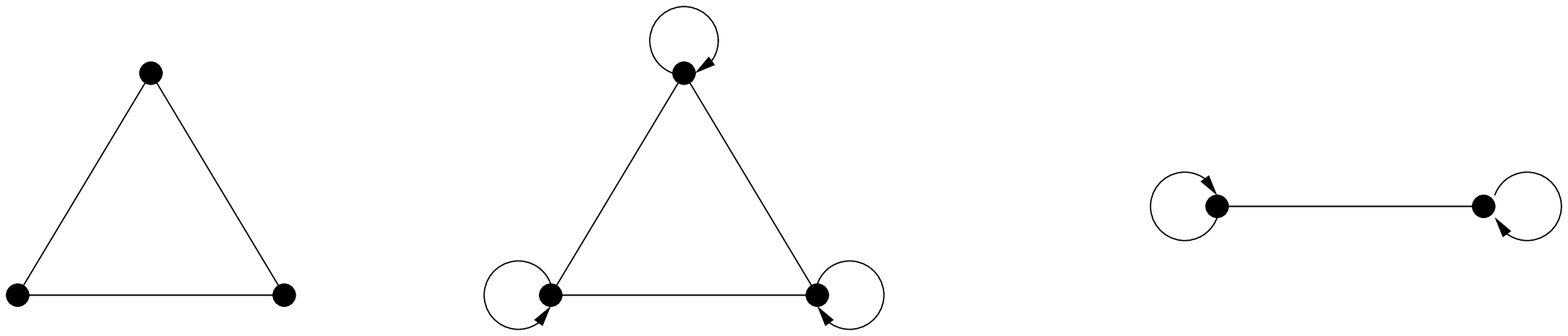}
    \put(8,-2.5){$G$}
    \put(40,-2.5){$L(G)$}
    \put(77,-2.5){$\mathcal{R}_{\{v_1,v_2\}}\big(L(G)\big)$}
    \put(-1,4){$v_1$}
    \put(17,4){$v_2$}
    \put(5,16){$v_3$}
    \put(33.5,5){$v_1$}
    \put(50.5,5){$v_2$}
    \put(38,16){$v_3$}
    \put(34,10){$-1$}
    \put(48.5,10){$-1$}
    \put(41,3){$-1$}
    \put(28,1){$2$}
    \put(57,1){$2$}
    \put(46.5,17.5){$2$}
    \put(78,4.5){$v_1$}
    \put(93,4.5){$v_2$}
    \put(82,10){$-\frac{\lambda+3}{\lambda-2}$}
    \put(66.5,7){$\frac{2\lambda-1}{\lambda-2}$}
    \put(100,7){$\frac{2\lambda-1}{\lambda-2}$}
    \end{overpic}
  \end{center}
  \caption{Reduction of the graph $L(G)$.}
\end{figure}

\begin{remark}
One could generalize $M_L(G)$ to any $G\in\mathbb{G}$ where $G$ has no loops and $n$ vertices by setting $M_L(G)_{ij}=-M(G)_{ij}$ for $i\neq j$ and $M_L(G)_{ii}=\sum_{j=1,j\neq i}^n M(G)_{ij}$. This generalizes and is consistent with what is done for weighted digraphs in \cite{Wu05} for example.
\end{remark}

\subsection{Sequential Reductions} 
As any reduction $\mathcal{R}_S(G)$ of a graph $G\in\mathbb{G}$ over the structural set $S$ is again a graph in $\mathbb{G}$ it is natural to consider sequences of reductions on a graph as well as to what degree a graph can be reduced via such reductions. In order to address this we need to first extend our notation to an arbitrary sequence of reductions. 

\begin{definition}
For $G=(V,E,\omega)$ suppose $S_1,\dots,S_m\subseteq V$ such that $S_1\in st(G)$, $\mathcal{R}_1(G)=\mathcal{R}_{S_1}(G)$ and $$S_{i+1}\in st(\mathcal{R}_i(G)) \ \text{where} \ \mathcal{R}_{S_{i+1}}(\mathcal{R}_i(G))=\mathcal{R}_{i+1}(G), \  1\leq i\leq m-1.$$ If this is the case then we say $S_1,\dots,S_m$ \textit{induces a sequence of reductions} on $G$ and we write $\mathcal{R}_i(G)=\mathcal{R}(G;S_1,\dots,S_i)$ for $1\leq i \leq m-1$. Moreover, we let $$\mathcal{N}(G;S_1,\dots,S_i)=\mathcal{N}(G;S_1,\dots,S_{i-1})\cup\mathcal{N}(\mathcal{R}_{i-1}(G);S_i), \ 2\leq i\leq m.$$ 
\end{definition}

The following is an immediate corollary of theorem \ref{theorem1}.

\begin{corollary}
Suppose $S_1,\dots,S_m$ induces a sequence of reductions on the graph $G\in\mathbb{G}$. Then $\sigma(G)$ and $\sigma(\mathcal{R}(G;S_1,\dots,S_m))$ differ at most by $\mathcal{N}(G;S_1,\dots, S_m)$.
\end{corollary}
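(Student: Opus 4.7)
The plan is to proceed by induction on $m$, with the base case $m=1$ being exactly Theorem \ref{theorem1} applied to $G$ with structural set $S_1$. Before setting up the induction, I would first establish a small transitivity lemma for the relation ``differs at most by'': namely, if $\sigma(G_1)$ and $\sigma(G_2)$ differ at most by $N_1$, and $\sigma(G_2)$ and $\sigma(G_3)$ differ at most by $N_2$, then $\sigma(G_1)$ and $\sigma(G_3)$ differ at most by $N_1\cup N_2$. This follows from the definition by the chain of equalities
\[
\sigma(G_1)\setminus(N_1\cup N_2)=\bigl(\sigma(G_1)\setminus N_1\bigr)\setminus N_2=\bigl(\sigma(G_2)\setminus N_1\bigr)\setminus N_2=\bigl(\sigma(G_2)\setminus N_2\bigr)\setminus N_1=\bigl(\sigma(G_3)\setminus N_2\bigr)\setminus N_1=\sigma(G_3)\setminus(N_1\cup N_2),
\]
where I use that the removal operation on lists commutes (it simply discards every entry whose underlying complex number lies in the given set) and is therefore associative when iterated.

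For the inductive step, assume the statement holds for any sequence of length $m-1$. Given a sequence $S_1,\dots,S_m$ inducing reductions on $G$, the inductive hypothesis applied to the initial subsequence gives that $\sigma(G)$ and $\sigma(\mathcal{R}_{m-1}(G))$ differ at most by $\mathcal{N}(G;S_1,\dots,S_{m-1})$. By the definition of a sequence of reductions, $S_m\in st(\mathcal{R}_{m-1}(G))$, so Theorem \ref{theorem1} applies to the graph $\mathcal{R}_{m-1}(G)\in\mathbb{G}$ with structural set $S_m$, yielding that $\sigma(\mathcal{R}_{m-1}(G))$ and $\sigma(\mathcal{R}_m(G))=\sigma(\mathcal{R}(G;S_1,\dots,S_m))$ differ at most by $\mathcal{N}(\mathcal{R}_{m-1}(G);S_m)$.

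Chaining these two statements via the transitivity lemma, $\sigma(G)$ and $\sigma(\mathcal{R}(G;S_1,\dots,S_m))$ differ at most by the union
\[
\mathcal{N}(G;S_1,\dots,S_{m-1})\cup\mathcal{N}(\mathcal{R}_{m-1}(G);S_m),
\]
which is precisely $\mathcal{N}(G;S_1,\dots,S_m)$ by the recursive definition preceding the corollary. This closes the induction.

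There is no real obstacle here beyond the bookkeeping: the content of the result lies entirely in Theorem \ref{theorem1}, and the corollary is obtained by iterating. The only point worth double-checking is that ``differ at most by'' behaves well under composition, which is the transitivity observation above; once that is in hand the inductive step is mechanical, and the recursive definition of $\mathcal{N}(G;S_1,\dots,S_m)$ is exactly tailored so that the accumulated exceptional sets add up correctly.
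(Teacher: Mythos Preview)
Your proposal is correct and is precisely the intended argument: the paper itself gives no proof, stating only that this is an immediate corollary of Theorem~\ref{theorem1}, and your induction on $m$ together with the transitivity observation for ``differ at most by'' is exactly how that immediacy is cashed out.
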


\begin{definition}
For $p\in\mathbb{C}[\lambda]$ let $deg(p)$ be the degree of $p$ and for $\omega=p/q\in\mathbb{W}$ let $\pi(\omega)=deg(p)-deg(q)$. Let $\mathbb{G}_\pi\subseteq\mathbb{G}$ be the set of graphs with the property that for any $G\in\mathbb{G}_\pi$ with vertex set $\{v_1,\dots,v_n\}$, $\pi(M(G)_{ij})\leq 0$ for all $1\leq i,j\leq n$.
\end{definition}

\begin{remark}
Note that any graph $G$ where $M(G)\in\mathbb{C}^{n\times n}$ is in the set $\mathbb{G}_\pi$.
\end{remark}

With this in place we give the following theorem on sequential reductions.

\begin{theorem}\label{theorem3}\textbf{(Commutativity of Reductions)}
For $G\in\mathbb{G}_\pi$ suppose the sequences $S_1,\dots,S_m$ and $T_1,\dots,T_n$ both induce a sequence of reductions on $G$. If $S_m=T_n$ then $\mathcal{R}(G;S_1,\dots,S_m)=\mathcal{R}(G;T_1,\dots,T_n)$.
\end{theorem}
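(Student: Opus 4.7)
The plan is to recast each reduction as a Schur complement and deduce the theorem from the quotient formula for Schur complements (Crabtree--Haynsworth), which guarantees that iterated Schur complements depend only on the total block eliminated.

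Step 1: Identify the reduction with a Schur complement. For $S \in st(G)$, block $M(G) = \begin{pmatrix} A & B \\ C & D \end{pmatrix}$ with rows and columns partitioned by $S$ and $\bar{S}$. The structural-set hypotheses imply $D - \lambda I$ is invertible in $\mathbb{W}^{\bar{S} \times \bar{S}}$: since $\bar{S}$ induces no cycles in $\ell(G)$, some ordering of $\bar{S}$ makes the off-diagonal part of $D$ strictly upper triangular, so $D - \lambda I$ is triangular with nonzero diagonal entries $\omega(e_{ii}) - \lambda$. Expanding $(\lambda I - D)^{-1}$ as a finite geometric series in this nilpotent off-diagonal and matching term-by-term against the definition of $\mathcal{P}_\omega(\beta)$ gives
$$ M(\mathcal{R}_S(G)) - \lambda I = (A - \lambda I) - B(D - \lambda I)^{-1} C, $$
so that $M(\mathcal{R}_S(G)) - \lambda I$ is the Schur complement of the $\bar{S}$-block in $M(G) - \lambda I$.

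Step 2: Apply the quotient formula iteratively. Each step in the reduction sequence $S_1, \dots, S_m$ takes a Schur complement of the previous matrix: $M(\mathcal{R}_{i+1}(G)) - \lambda I$ is the Schur complement of the $(S_i \setminus S_{i+1})$-block in $M(\mathcal{R}_i(G)) - \lambda I$. The quotient formula asserts that nested Schur complements compose into a single Schur complement over the combined eliminated block; iterating this along the sequence yields
$$ M(\mathcal{R}(G; S_1, \dots, S_m)) - \lambda I = (M(G) - \lambda I)/(M(G) - \lambda I)_{\overline{S_m}, \overline{S_m}}, $$
with invertibility of the combined $\overline{S_m}$-block propagated from the invertibility of the intermediate blocks supplied by the sequence hypotheses. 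Applying the same identity to the $T$-sequence and using $\overline{S_m} = \overline{T_n}$ shows that both reductions coincide with the same Schur complement, hence are equal as weighted digraphs on $S_m$.

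The hypothesis $G \in \mathbb{G}_\pi$ enters in two places. First, it automates the loop condition $\omega(e_{ii}) \neq \lambda$ at every vertex, since $\pi(\omega(e_{ii})) \leq 0 < 1 = \pi(\lambda)$. Second, $\mathbb{G}_\pi$ is closed under reductions: a direct estimate gives $\pi(\mathcal{P}_\omega(\beta)) \leq 0$ for every branch $\beta$ (the first factor $\omega(e_{12})$ has $\pi \leq 0$ while each interior factor $\omega(e_{k,k+1})/(\lambda - \omega(e_{kk}))$ has $\pi \leq -1$), so $\pi(\mu(e_{ij})) \leq 0$ for every entry of $\mathcal{R}_S(G)$. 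Together these ensure the structural-set conditions stay available at every stage of the sequence.

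The main obstacle is Step 1: reconciling the combinatorial branch-product definition of $\mathcal{R}_S(G)$ (sums over simple paths with distinct interior vertices) with the algebraic Schur-complement formula (whose natural expansion of $(\lambda I - D)^{-1}$ allows vertex repetitions a priori). The no-cycle DAG structure on $\bar{S}$ is exactly what forces the formal series to terminate at simple-path terms and matches the branch enumeration exactly. Once Step 1 is established, Step 2 is a standard matrix-algebra assertion, and induction along the sequence completes the proof.
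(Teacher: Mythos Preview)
Your proof is correct and takes a genuinely different route from the paper. The paper argues combinatorially: Lemma~\ref{lemma4} shows that a single reduction $\mathcal{R}_S(G)$ can be realized as a sequence of one-vertex removals $\bar{\mathcal{R}}(G;v_{\rho(k)},\dots,v_{\rho(n)})$ in any order, by directly matching branch products; Lemma~\ref{lemma5} then checks, by an explicit two-vertex computation, that adjacent single-vertex removals commute, and the theorem follows by decomposing both sequences into the same multiset of single-vertex removals. Your approach instead recognizes $M(\mathcal{R}_S(G))-\lambda I$ as the Schur complement of the $\bar S$-block in $M(G)-\lambda I$ (the acyclicity of $\bar S$ in $\ell(G)$ is precisely what makes the geometric-series expansion terminate at simple paths and recover the branch-product formula), and then invokes the Crabtree--Haynsworth quotient identity to collapse any nested sequence of Schur complements to the single Schur complement over $\overline{S_m}=\overline{T_n}$. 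Your argument is cleaner and more conceptual, and in fact shows that the $\mathbb{G}_\pi$ hypothesis is not really needed here: the assumption that $S_1,\dots,S_m$ \emph{induces} a sequence of reductions already supplies all the intermediate invertibility required by the quotient formula. The paper's approach, on the other hand, is self-contained and yields the intermediate Lemmas~\ref{lemma4} and~\ref{lemma5} as standalone tools, which it reuses in the proof of Theorem~\ref{theorem-1}.
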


That is, the final vertex set in a sequence of reductions completely specifies the reduced graph irrespective of the specific sequence. However, if  $S_1,\dots,S_m$ and $T_1,\dots,T_n$ both induce a sequence of reductions on $G\in\mathbb{G}_\pi$ it is possible that $\mathcal{N}(G;S_1,\dots,S_m)\neq\mathcal{N}(G;T_1,\dots,T_n)$ even if $S_m=T_n$. A natural goal then in designing a sequence of reductions is to somehow minimize $\mathcal{N}(G;S_1,\dots,S_m)$ by carefully choosing the structural set $S_i$ at the $ith$ step in a particular sequence.

To address the extent to which a graph may be reduced via some sequence of reductions as well as indicate the uniqueness and flexibility of such reductions we give the following theorem.

\begin{theorem}\label{theorem-1}\textbf{(Existence and Uniqueness)}
Let $G=(V,E,\omega)$ and $\mathcal{V}$ be any nonempty subset of $V$. If $G\in\mathbb{G}_\pi$ then there exist a sequence $S_1,\dots,S_{m-1},\mathcal{V}$ inducing a sequence of reductions on $G$. Moreover, for any such sequence $T_1,\dots,T_{n-1},\mathcal{V}$ there is a unique graph $\mathcal{R}_{\mathcal{V}}[G]=\mathcal{R}(G;T_1,\dots,T_{n-1},\mathcal{V})$ independent of the particular sets $T_1,\dots,T_{m-1}$.
\end{theorem}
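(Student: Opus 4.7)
The plan is to treat existence and uniqueness separately, with uniqueness reduced almost immediately to Theorem \ref{theorem3}.

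For existence, I propose to remove the vertices of $\bar{\mathcal{V}}=V\setminus\mathcal{V}$ one at a time. Fix an enumeration $\bar{\mathcal{V}}=\{v_1,\dots,v_k\}$, set $G_0=G$, and inductively let $S_i$ be the vertex set of $G_{i-1}$ with $v_i$ removed and $G_i=\mathcal{R}_{S_i}(G_{i-1})$. Two conditions must hold at each step for $S_i$ to be a structural set of $G_{i-1}$: (i) the complement $\{v_i\}$ induces no cycle in $\ell(G_{i-1})$, which is automatic since a single vertex cannot form a cycle once loops are stripped, and (ii) the loop weight at $v_i$ in $G_{i-1}$ is not equal to $\lambda$. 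After $k$ steps the current vertex set is exactly $\mathcal{V}$, so if (ii) holds at every stage this produces the required sequence.

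Condition (ii) will be secured by the auxiliary lemma that $\mathbb{G}_\pi$ is closed under isospectral reduction, which I would prove as a direct degree count on branch products. For a branch $\beta=v_1,\dots,v_m$ with $m>2$, each factor $\omega(e_{i,i+1})/(\lambda-\omega(e_{ii}))$ has $\pi\leq -1$: writing $\omega(e_{ii})=p/q$ with $\deg p\leq\deg q$, the denominator $\lambda-p/q=(\lambda q-p)/q$ has $\pi=1$ while $\pi(\omega(e_{i,i+1}))\leq 0$. Together with $\pi(\omega(e_{12}))\leq 0$ this gives $\pi(\mathcal{P}_\omega(\beta))\leq 0$, and for $m=2$ it is immediate. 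Since sums of rational functions with nonpositive $\pi$ retain that property (a standard estimate on the degrees of numerator and denominator of a common-denominator sum), every entry of $M(\mathcal{R}_{S_i}(G_{i-1}))$ satisfies $\pi\leq 0$. In particular, the loop weight at every vertex of $G_i$ has $\pi\leq 0<1=\pi(\lambda)$ and hence cannot equal $\lambda$, which closes the induction and keeps the whole sequence inside $\mathbb{G}_\pi$.

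Uniqueness is then immediate: given any two sequences $S_1,\dots,S_{m-1},\mathcal{V}$ and $T_1,\dots,T_{n-1},\mathcal{V}$ inducing reductions on $G$, all intermediate graphs lie in $\mathbb{G}_\pi$ by the closure lemma, and the two sequences terminate at the common final structural set $\mathcal{V}$. Theorem \ref{theorem3} therefore yields $\mathcal{R}(G;S_1,\dots,S_{m-1},\mathcal{V})=\mathcal{R}(G;T_1,\dots,T_{n-1},\mathcal{V})$, and this common graph is declared to be $\mathcal{R}_{\mathcal{V}}[G]$. The main obstacle in the whole argument is the closure lemma for $\mathbb{G}_\pi$; once it is in hand, existence becomes a one-vertex-at-a-time construction and uniqueness is a direct invocation of commutativity.
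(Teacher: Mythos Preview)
Your proposal is correct and follows essentially the same route as the paper: the paper's proof invokes its Lemma~\ref{lemma-1} (closure of $\mathbb{G}_\pi$ under reduction, and $V\setminus\{v_i\}\in st(G)$), which is exactly the auxiliary closure lemma you sketch via degree counts, to justify removing the vertices of $\bar{\mathcal{V}}$ one at a time, and then cites Theorem~\ref{theorem3} for uniqueness. The only cosmetic difference is that the paper isolates the closure statement as a separate lemma rather than proving it inline.
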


\begin{remark}
It is important to mention that $\mathcal{V}$ in theorem \ref{theorem-1} is any subset of vertices of the graph $G$. In particular, $\mathcal{V}$ may not be a structural set of $G$.
\end{remark}

\section{Reductions in Steps}

In this section we introduce a related but alternate way of describing the branch structure of a graph where this structure is again related to the graph's spectrum. We then use this structure along with a small number of graph transformations to give a constructive method for reducing any graph $G\in\mathbb{G}$ over any $S\in st(G)$.

\subsection{Branch Decompositions}
For a given branch $\beta=v_1,\dots,v_m$ in the graph $G=(V,E,\omega)$ let the \textit{weight sequence} of $\beta$ be the sequence given by $$\Omega(\beta)=\omega(e_{11}),\dots,\omega(e_{i-1,i}),\omega(e_{ii}),\omega(e_{i,i+1}),\dots,\omega(e_{mm}).$$ 

\begin{definition}
For a structural set $S=\{v_1,\dots,v_m\}$ of a digraph $G$ let $$\mathcal{D}_S(G)=\bigcup_{1\leq i,j \leq m}\{(v_i,v_j;\Omega(\beta)):\beta\in\mathcal{B}_{ij}(G;S)\}$$ be the \textit{branch decomposition} of $G$ with respect to $S$ where this set includes multiplicities. 
\end{definition}

Note that $\mathcal{D}_S(G)$ can be written as lists but the formulation above is more convenient.

\begin{definition}\label{def3}
Let $G,H\in\mathbb{G}$. We say $G$ and $H$ have a common branch decomposition via the structural sets $S$ and $T$ respectively if $S\in st(G)$, $T\in st(H)$ and there is a one-to-one map $\rho:S\rightarrow T$ such that $$D_T(H)=\bigcup_{1\leq i,j \leq m}\{(\rho(v_i),\rho(v_j);\Omega(\beta)):\beta\in\mathcal{B}_{ij}(G;S)\}.$$
\end{definition}

For example consider the unweighted digraphs $G$ and $H$ in Figures 1 and 3. For $T=\{v_1,v_4\}$ and $S=\{w_2,w_5\}$  
\begin{align*}
\mathcal{D}_T(H)=\big\{(v_1,v_1; 0,1,1,1,0)&,(v_1,v_4; 0,1,1,1,1),\\&(v_4,v_1; 1,1,0,1,0),(v_4,v_4; 1,1,0,1,1)\big\} 
\end{align*}
\begin{align*}
\mathcal{D}_S(G)=\big\{(w_2,w_2; 0,1,1,1,0)&,(w_2,w_5; 0,1,1,1,1),\\&(w_5,w_2; 1,1,0,1,0),(w_5,w_5; 1,1,0,1,1)\big\}.
\end{align*} These graphs have the same branch decomposition via the map $\rho:S\rightarrow T$ given by $\rho(v_2)=w_2$ and $\rho(v_3)=w_5$

Note that if two graphs $G$ and $H$ have a common branch decomposition with respect to the structural sets $S$ and $T$ respectively then there is some $\rho:S\rightarrow T$ such that $\mathcal{B}_{ij}(G;S)=\mathcal{B}_{\rho(i)\rho(j)}(H;T)$ for all $i,j$. This implies the following corollary of theorem \ref{theorem1}.

\begin{corollary}
Let $G,H\in\mathbb{G}$ having a common branch decomposition via $S$ and $T$ respectively. Then $\mathcal{R}_S(G)\simeq\mathcal{R}_T(H)$ and $\sigma(G)$ and $\sigma(H)$ differ at most by $\mathcal{N}(G;S)\cup\mathcal{N}(H;T)$.
\end{corollary}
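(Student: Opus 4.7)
The plan is to prove the isomorphism $\mathcal{R}_S(G)\simeq\mathcal{R}_T(H)$ first, and then derive the spectral statement by combining this isomorphism with Theorem \ref{theorem1} applied to each of $G$ and $H$ separately. The key observation that makes the isomorphism work is that the branch product $\mathcal{P}_\omega(\beta)$ defined in Definition \ref{branchprod} is a function only of the weight sequence $\Omega(\beta)$, since it is built from the entries $\omega(e_{i,i+1})$ and $\omega(e_{ii})$ that together make up $\Omega(\beta)$.

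First I would unpack the hypothesis. By Definition \ref{def3} there is a bijection $\rho:S\to T$ such that, for each ordered pair $(v_i,v_j)$ with $v_i,v_j\in S$, the map $\beta\mapsto\beta'$ sending a branch in $\mathcal{B}_{ij}(G;S)$ to a branch $\beta'\in\mathcal{B}_{\rho(i)\rho(j)}(H;T)$ with $\Omega(\beta')=\Omega(\beta)$ is a bijection of multisets. In particular $\mathcal{B}_{ij}(G;S)\neq\emptyset$ if and only if $\mathcal{B}_{\rho(i)\rho(j)}(H;T)\neq\emptyset$, so the edge sets of $\mathcal{R}_S(G)$ and $\mathcal{R}_T(H)$ correspond under $\rho$.

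Next I would show that $\rho$ is weight-preserving. Since corresponding branches have identical weight sequences, the formula for $\mathcal{P}_\omega(\beta)$ in Definition \ref{branchprod} yields the same element of $\mathbb{W}$ for $\beta$ and $\beta'$. Summing over the matching multisets of branches as in Definition \ref{reductiondef} gives
\begin{equation*}
\mu_G(e_{ij})=\sum_{\beta\in\mathcal{B}_{ij}(G;S)}\mathcal{P}_\omega(\beta)=\sum_{\beta'\in\mathcal{B}_{\rho(i)\rho(j)}(H;T)}\mathcal{P}_\omega(\beta')=\mu_H(\tilde{e}_{\rho(i)\rho(j)}),
\end{equation*}
so $\rho$ is an isomorphism of weighted digraphs, establishing $\mathcal{R}_S(G)\simeq\mathcal{R}_T(H)$.

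Finally, since isomorphic weighted digraphs have identical adjacency matrices up to simultaneous row and column permutation, they share the same spectrum; thus $\sigma(\mathcal{R}_S(G))=\sigma(\mathcal{R}_T(H))$. Applying Theorem \ref{theorem1} twice, $\sigma(G)\setminus\mathcal{N}(G;S)=\sigma(\mathcal{R}_S(G))\setminus\mathcal{N}(G;S)$ and $\sigma(H)\setminus\mathcal{N}(H;T)=\sigma(\mathcal{R}_T(H))\setminus\mathcal{N}(H;T)$. Removing the larger set $N=\mathcal{N}(G;S)\cup\mathcal{N}(H;T)$ from both sides of each equality and using the isomorphism yields $\sigma(G)\setminus N=\sigma(H)\setminus N$, which is the required conclusion. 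I do not expect a significant obstacle here; the only point that requires care is that the bijection of branches in Definition \ref{def3} is one of multisets, so that the sums defining $\mu_G$ and $\mu_H$ really do match term-for-term.
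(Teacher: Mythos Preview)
Your proposal is correct and follows essentially the same approach as the paper. The paper's own justification is only the one-line remark preceding the corollary, namely that a common branch decomposition yields a bijection $\rho:S\to T$ with $\mathcal{B}_{ij}(G;S)$ matching $\mathcal{B}_{\rho(i)\rho(j)}(H;T)$, after which the spectral claim is the already-stated Corollary~1; your write-up simply spells out the details (in particular, that $\mathcal{P}_\omega(\beta)$ depends only on $\Omega(\beta)$ and that the multiset convention makes the sums in Definition~\ref{reductiondef} agree term-by-term).
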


That is, common branch decompositions imply common reductions. One of the useful distinctions between branch reductions and decompositions is that the branch decomposition of a graph contains the lengths of the individual branches whereas the reduction does not. 

Moreover, we note that in some ways branch decompositions are a more natural graph theoretic object since graphs with a common branch decomposition share the same weight set. It will be the fact that the branch decomposition of a graph retains the weight set and branch lengths that will allow us to prove theorem \ref{theorem1}.

\subsection{Branch Manipulations}

If $G\in\mathbb{G}$ and $S\in st(G)$ then two branches $\beta_1, \beta_2\in \mathcal{B}_S(G)$ are said to be \textit{independent} if they have no interior vertices in common.

\begin{definition}
Let $G,\mathcal{X}\in\mathbb{G}$ be graphs with a common branch decomposition both with respect to the same set
$S$. Then $\mathcal{X}$ is a \textit{branch expansion} of $G$ with respect to $S$ if any two $\beta_1,\beta_2\in\mathcal{B}_S(\mathcal{X})$ are independent and each vertex of $\mathcal{X}$ is on a branch of $\mathcal{B}_S(\mathcal{X})$. 
\end{definition}

Note that $G$ and its branch expansion $\mathcal{X}$ may have all vertices in common or share only the vertices in $S$.

\begin{lemma}\textbf{(Branch Expansions)}\label{lemma0}
Let $S\in\ st(G)$ for $G\in\mathbb{G}$. Then an expansion $\mathcal{X}$ of $G$ with respect to $S$ exists and $\sigma(G)$ and $\sigma(\mathcal{X})$ differ at most by $\mathcal{N}(G;S)$.
\end{lemma}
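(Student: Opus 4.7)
The plan is to construct the expansion $\mathcal{X}$ explicitly and then compare the characteristic polynomials of $M(G)$ and $M(\mathcal{X})$ via a Schur complement of the block indexed by $\bar{S}$. I expect this comparison to exhibit the two determinants as differing only by a factor whose zeros and poles all lie in $\mathcal{N}(G;S)$.

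For the construction, I would take, for each branch $\beta = v_{i_0}, v_{i_1}, \dots, v_{i_k} \in \mathcal{B}_S(G)$ with $k \geq 2$, fresh vertices $v_{i_1}^\beta, \dots, v_{i_{k-1}}^\beta$, set $V(\mathcal{X}) = S \cup \bigcup_\beta \{v_{i_j}^\beta : 1 \leq j \leq k-1\}$, and add the edges of $\beta$ between consecutive (fresh-copy) vertices with the same weights as in $G$, together with loops on each interior copy inheriting $\omega(e_{i_j i_j})$. By construction, $\mathcal{X}$ and $G$ share the same branch decomposition with respect to $S$, distinct branches of $\mathcal{X}$ meet only inside $S$, and every interior vertex lies on a branch, so $\mathcal{X}$ is a valid branch expansion. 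Moreover, the loop weights on $\bar{S}$-copies are inherited from $G$, so $\mathcal{N}(\mathcal{X};S) \subseteq \mathcal{N}(G;S)$.

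For the spectral comparison I would order the vertices of $G$ with $S$ first and $\bar{S}$ in a topological order of the DAG induced on $\bar{S}$ by $\ell(G)$, giving
$$M(G) - \lambda I = \begin{pmatrix} A & B \\ C & D \end{pmatrix},$$
in which $D$ is upper triangular with diagonal entries $\omega(e_{ii}) - \lambda$ for $v_i \in \bar{S}$, hence $\det(D) = \prod_{v_i \in \bar{S}}(\omega(e_{ii}) - \lambda)$ is invertible in $\mathbb{W}$ off $\mathcal{N}(G;S)$. For such $\lambda$ the Schur complement formula gives $\det(M(G) - \lambda I) = \det(D)\det(A - BD^{-1}C)$. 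A Neumann series expansion of $D^{-1}$, obtained by splitting the diagonal loop part from the nilpotent off-diagonal DAG part, identifies $(BD^{-1}C)_{ij}$ as a sum over walks in $\bar{S}$ from the out-neighbors of $v_i$ to the in-neighbors of $v_j$; combined with the direct edges recorded in $A$, these walks biject with $\mathcal{B}_{ij}(G;S)$, each weighted by the branch product $\mathcal{P}_\omega(\beta)$ of Definition~\ref{branchprod}, yielding $A - BD^{-1}C = M(\mathcal{R}_S(G)) - \lambda I_S$. The identical computation applied to $\mathcal{X}$ produces the same Schur complement, since $G$ and $\mathcal{X}$ share a branch decomposition and therefore have the same reduction.

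Combining the two Schur complement identities yields
$$\frac{\det(M(\mathcal{X}) - \lambda I)}{\det(M(G) - \lambda I)} = \frac{\det(D')}{\det(D)},$$
a finite product of factors of the form $(\omega(e_{vv}) - \lambda)^{\pm 1}$ with $v \in \bar{S}$, whose zeros and poles lie in $\mathcal{N}(G;S)$. Thus the characteristic polynomials of $G$ and $\mathcal{X}$ have identical zero multisets with multiplicities away from $\mathcal{N}(G;S)$, which is the claim. The main obstacle will be the combinatorial identification of the Schur complement with $M(\mathcal{R}_S(G)) - \lambda I_S$: this requires a careful Neumann series for $D^{-1}$ that exploits the DAG structure of $\bar{S}$ in $\ell(G)$, a term-by-term match with the branch product $\mathcal{P}_\omega(\beta)$, and a verification that $\mathcal{N}(G;S)$ is precisely the exceptional set on which the expansion can fail.
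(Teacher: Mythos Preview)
Your argument is correct and takes a genuinely different route from the paper. The paper proceeds by an iterative local construction: it introduces a ``branch separation'' move in which an auxiliary copy of a shared interior vertex $v_2$ is adjoined (as an isolated $v_1$ with the same loop weight), one incoming edge of $v_2$ is redirected to $v_1$, and the out-edges of $v_2$ are duplicated onto $v_1$; an explicit cofactor expansion shows this leaves $\det(M-\lambda I)$ unchanged. Repeating this (and its transposed version) until all branches are independent, and then invoking Proposition~\ref{prop1} to strip off vertices and edges not lying on any branch, yields $\mathcal{X}$. Your approach is global and algebraic: the Schur-complement identity $\det(M(G)-\lambda I)=\det(D)\,\det(A-BD^{-1}C)$, together with the Neumann-series/path-counting identification $A-BD^{-1}C=M(\mathcal{R}_S(G))-\lambda I_S$, reduces the comparison of $\sigma(G)$ and $\sigma(\mathcal{X})$ to comparing $\det(D)$ with $\det(D')$. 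In fact you obtain more than Lemma~\ref{lemma0}: the Schur identity is itself a direct proof of Theorem~\ref{theorem1}, so your route renders the paper's detour through Lemmas~\ref{lemma0} and~\ref{lemma2} unnecessary for the main result. What the paper's approach buys is a purely combinatorial argument that makes the expansion visible as a sequence of elementary spectrum-preserving moves, at the price of the ad~hoc determinant computations. One small refinement to your write-up: since $\omega(e_{ii})\neq\lambda$ for $v_i\in\bar S$ by the structural-set hypothesis, $\det(D)$ is a nonzero element of the field $\mathbb{W}$ and the Schur formula holds as an identity of rational functions; the restriction to $\lambda\notin\mathcal{N}(G;S)$ is only needed at the very last step when reading off zero multiplicities.
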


An example of a branch expansion is seen in figures 1 and 3, $G$ being an expansion of $H$ over the set $T=\{v_1,v_4\}$ if the vertices of $G$ are relabeled via $w_2\mapsto v_2$ and $w_5\mapsto v_3$.

Let $G=(V,E,\omega)$ be a weighted digraph where $V=\{v_1,\dots,v_n\}$ and $e_{ik}\in E$. Suppose $\omega_{ij},\omega_{jj},\omega_{jk}\in\mathbb{W}$ for $j\neq i,k$ where $\omega(e_{ij})=\omega_{ij}\omega_{jk}/(\lambda-\omega_{jj})$. If we replace $e_{ik}$ in $G$ by the two edges and loop with associated weights as in figure 5 then we call the resulting graph the graph $G$ with \textit{loop bisected edge} $e_{ik}$ with intermediate vertex $v_j$.

\begin{figure}
  \begin{center}
    \begin{overpic}[scale=.45]{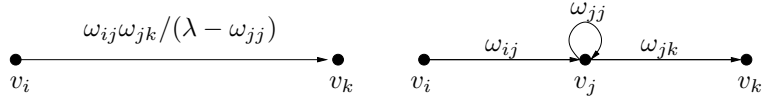}
    \put(10,4){$\omega_{ij}\omega_{jk}/(\lambda-\omega_{jj})$}
    \put(0,-3){$v_i$}
    \put(54,-3){$v_i$}
    \put(43,-3){$v_k$}
    \put(98,-3){$v_k$}
    \put(76,-3){$v_j$}
    \put(75.5,7){$\omega_{jj}$}
    \put(64,2){$\omega_{ij}$}
    \put(85,2){$\omega_{jk}$}
    \end{overpic}
  \end{center}
  \caption{Loop bisection of an edge.}
\end{figure}

\begin{lemma}\label{lemma2}\textbf{(Loop Bisection)}
Let $G=(V,E,\omega)$ where $e_{ik}\in E$ such that $\omega(e_{ik})=\omega_{ij}\omega_{jk}/(\lambda-\omega_{jj})$. If $\tilde{G}$ is the graph $G$ with loop bisected edge $e_{ik}$ and $S\in st(G)$ then $S\in st(\tilde{G})$ and the spectra $\sigma(G)$, $\sigma(\tilde{G})$ differ at most by $\mathcal{N}(G;S)$.
\end{lemma}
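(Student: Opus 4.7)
The plan is to prove the lemma in two stages: first verifying that $S$ remains a structural set of $\tilde G$, and then comparing $\det(M(G)-\lambda I)$ with $\det(M(\tilde G)-\lambda I)$ via a Schur-complement identity.

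For the structural-set check, observe that $\tilde G$ has one extra vertex $v_j$, so the complement of $S$ in $\tilde G$ is $\bar S\cup\{v_j\}$. Any cycle in $\ell(\tilde G)$ confined to this enlarged set either avoids $v_j$, in which case it is already a cycle in $\ell(G)$ on $\bar S$, or it passes through $v_j$; in that case it must use the edges $e_{ij}$ and $e_{jk}$ consecutively, since these are the only non-loop edges incident to $v_j$. Collapsing the subpath $v_i\to v_j\to v_k$ to the edge $e_{ik}$ of $G$ then produces a cycle in $\ell(G)$ on $\bar S$. Either scenario contradicts $S\in st(G)$. The loop condition $\omega(e_{tt})\neq\lambda$ is inherited from $G$ on the old vertices, and at $v_j$ it holds because $\omega_{jj}=\lambda$ would render $\omega(e_{ik})=\omega_{ij}\omega_{jk}/(\lambda-\omega_{jj})$ undefined in $\mathbb{W}$.

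For the spectral identity, order the vertices of $\tilde G$ with $v_j$ last so that
\[
M(\tilde G)-\lambda I \;=\; \begin{pmatrix} A & b \\ c^{T} & \omega_{jj}-\lambda \end{pmatrix},
\]
where $A=M(G)-\lambda I-\omega(e_{ik})\,E_{ik}$ (the bisection zeroes out the $(i,k)$ entry), $E_{ik}$ denotes the matrix unit in position $(i,k)$, and $b,c$ are vectors whose sole nonzero entries are $\omega_{ij}$ in row $i$ and $\omega_{jk}$ in row $k$ respectively. Since $\omega_{jj}-\lambda\neq 0$ in $\mathbb{W}$, the Schur complement yields
\[
\det(M(\tilde G)-\lambda I) \;=\; (\omega_{jj}-\lambda)\,\det\!\Bigl(A-(\omega_{jj}-\lambda)^{-1}bc^{T}\Bigr).
\]
The outer product is $bc^{T}=\omega_{ij}\omega_{jk}\,E_{ik}$, so $(\omega_{jj}-\lambda)^{-1}bc^{T}=-\omega(e_{ik})\,E_{ik}$, and this correction exactly cancels the removed-edge term in $A$. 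Consequently
\[
\det(M(\tilde G)-\lambda I) \;=\; (\omega_{jj}-\lambda)\,\det(M(G)-\lambda I),
\]
showing that $\sigma(\tilde G)$ is the list $\sigma(G)$ augmented by the solutions of $\omega_{jj}(\lambda)=\lambda$.

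The step I expect to require the most care is showing that these extra roots lie in $\mathcal{N}(G;S)$. The roots of $\omega_{jj}(\lambda)=\lambda$ are exactly the values at which the original edge weight $\omega(e_{ik})$ becomes undefined, which fits into the "undefined" clause in the definition of $\mathcal{N}$ once one works in the bisected picture (equivalently, under the natural identification these roots are precisely $\mathcal{N}(\tilde G;S)\setminus\mathcal{N}(G;S)$ arising from the new loop at $v_j$). With this identification, removing the values of $\mathcal{N}(G;S)$ from both spectra gives $\sigma(G)\setminus\mathcal{N}(G;S)=\sigma(\tilde G)\setminus\mathcal{N}(G;S)$ as lists, completing the proof.
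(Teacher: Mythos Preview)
Your Schur-complement derivation of
\[
\det(M(\tilde G)-\lambda I)=(\omega_{jj}-\lambda)\,\det(M(G)-\lambda I)
\]
is correct and is a genuinely cleaner route than the paper's, which reaches the same identity by adjoining an isolated auxiliary vertex to $\mathcal G$ and then expanding both characteristic determinants by cofactors along two rows. Your structural-set verification is also correct and more explicit than anything the paper writes out.

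The gap you flagged is real, though, and your final paragraph does not close it. The set $\mathcal N(G;S)$ is defined solely from the loop weights $\omega(e_{tt})$ with $v_t\in\bar S$ in the \emph{original} graph $G$; that the non-loop edge weight $\omega(e_{ik})$ becomes undefined at the roots of $\lambda=\omega_{jj}$ plays no role in that definition. What you have actually shown is that the extra roots lie in $\mathcal N(\tilde G;S)$, which can be strictly larger. Indeed the lemma as literally stated fails: take $G$ on $\{v_1,v_2\}$ with the single edge $e_{12}$ of weight $1/(\lambda-5)$ and $S=V$; then $\mathcal N(G;S)=\emptyset$, yet $\sigma(G)=\{0,0\}$ while $\sigma(\tilde G)=\{0,0,5\}$. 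The paper's own proof is no more careful here, dispatching the point with a one-line appeal to ``an argument similar to Proposition~1''. What actually makes the lemma safe in the paper's application (Section~4.3) is an unstated hypothesis: there it is only ever invoked with $\omega_{jj}$ equal to the loop weight of an interior branch vertex already present in $\bar S$, so the roots of $\lambda=\omega_{jj}$ automatically lie in $\mathcal N(G;S)$. You should either add that hypothesis or weaken the conclusion to $\mathcal N(\tilde G;S)$.
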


\subsection{Independent Branch Reductions}
The goal now is to combine lemmas \ref{lemma0} and \ref{lemma2} to construct the reduction $\mathcal{R}_S(G)$ which can be done as follows.

The first step in the reduction of $G$ over the structural set $S$ is to in fact do the opposite. That is, we first would like to find a branch expansion of $G$. Note that we can explicitly construct an expansion $\mathcal{X}$ of $G$ via $\mathcal{D}_S(G)$ by taking each pair of vertices in $S$ and connecting them by independent branches with weight sets and multiplicities as specified in this decomposition. Since each vertex of $\mathcal{X}$ is by construction on some branch of $\mathcal{X}$ then this is in fact an expansion of $G$ giving a proof to the existence claim in lemma \ref{lemma0}.

For the next step in this reduction we use the lemma \ref{lemma2} to shorten the lengths of the independent branches in the expanded graph $\mathcal{X}$. Specifically, if $\beta\in\mathcal{B}_{ij}(\mathcal{X},S)$ has weight set $\Omega(\beta)=\{\omega_1,\ell_1,\omega_2,\ell_2\dots,\omega_{n-1},\ell_{n-1},\omega_n\}$, $n>1$ then by lemma \ref{lemma2} we may modify this weight set to $\{\omega_1\omega_2/(\lambda-\ell_1),\ell_2,\dots,\omega_{n-1},\ell_{n-1},\omega_n\}$ without effecting the spectrum of the graph by more than $\mathcal{N}(G;S)$. If this is continued until $\beta$ is reduced to a single edge $\beta_e$ from $v_i$ to $v_j$ then $\beta_e$ has weight $\mathcal{P}_{\omega}(\beta)$.

If every branch of $\mathcal{X}$ is contracted to a single edge in this way then, after the multiple edges are made single via our convention, the resulting graph is the graph $\mathcal{R}_S(G)$ defined in definition \ref{reductiondef}. Moreover, as each step in this process does not change the spectrum of the graph by more than $\mathcal{N}(G;S)$ then the proof of theorem \ref{theorem1} follows once lemmas \ref{lemma0} and \ref{lemma2} are known to hold.

\section{Proofs}
Before we prove the main results of this paper we note the following (see \cite{Brualdi91} for details). First, a directed graph is \textit{strongly connected} if there is a path (possibly of length zero) from each vertex of the graph to every other vertex. The \textit{strongly connected components} of $G=(V,E)$ are its maximal strongly connected subgraphs. Moreover, its vertex set $V=\{v_1,\dots,v_n\}$ can always be labeled in such a way that $M(G)$ has the following triangular block structure

$$M(G)=\left[ \begin{array}{cccc}
M(\mathbb{S}_1)      & 0     & \dots  & 0\\
*        & M(\mathbb{S}_2)   &        & \vdots\\
\vdots   &       & \ddots & 0\\
*        & \dots & *      & M(\mathbb{S}_m) 
\end{array}
\right]$$ where $\mathbb{S}_i$ is a strongly connected component of $G$ and $*$ are block matrices with possibly nonzero entries. As $\det(M(G))=\prod_{i=1}^m\det(M(\mathbb{S}_i))$ then, since edges between strongly connected components correspond to the entries in the block matrices below the diagonal blocks, these edges may be removed or their weights changed without effecting $\sigma(G)$. Moreover, as an edge of $G$ is in a strongly connected component of $G$ if and only if it is on some cycle then all edges belonging to no strongly connected components of $G$ can be removed without effecting $\sigma(G)$.

With this in mind, for the graph $G=(V,E,\omega)$ where $V=\{v_1,\dots,v_n\}$ and $S\in st(G)$, we say an edge $e_{ij}\in E$ is not on any branch of $\mathcal{B}_S(G)$ if $v_i,v_j$ do not both belong to some $\beta$ for all $\beta\in\mathcal{B}_S(G)$. If $e_{ij}$ is not on any branch of $\mathcal{B}_S(G)$ then the claim is that it cannot be on a cycle unless the cycle is a loop. 

To see this note that every cycle which is not a loop must contain a vertex of $S$ for $\bar{S}$ not to induce a cycle in $\ell(G)$. Hence, every cycle is either a single branch or the union of several branches in $\mathcal{B}_S(G)$. This implies that all edges except for loops off the branch set $\mathcal{B}_S(G)$ may be removed without effecting the graph's spectrum.

On the other hand, suppose $e_{jj}$ is a loop, having weight possibly equal to 0, on the vertex $v_j$ where $v_j$ is not a vertex on any branch of $\mathcal{B}_S(G)$. Then this vertex may also be removed from $G$ without effecting the spectrum of the graph by more than $\mathcal{N}(G;S)$. This follows from the fact in this case that the vertex $v_j$ is itself a strongly connected component of $G$ since, by the discussion above, it lies on no other cycle of $G$. Hence, if $\mathbb{S}_1,\dots,\mathbb{S}_m$ are the strongly connected components of $G$ where $v_j=\mathbb{S}_\ell$ then 
\begin{equation}\label{eq3}
\det(M(G)-\lambda I)=(\omega(e_{jj})-\lambda)\prod_{k=1,k\neq \ell}^m\det(M(\mathbb{S}_k)-\lambda I).
\end{equation} Therefore, removing $v_j$ from $G$ (which removes $e_{jj}$) changes $\sigma(G)$ at most by solutions to the equation $\lambda=\omega(e_{jj})$ or the $\lambda$ such that $\omega(e_{jj})$ is undefined, all of which are in $\mathcal{N}(G;S)$. We record this as the following proposition.

\begin{proposition}\label{prop1}
Let $G=(V,E,\omega)$ and $S\in st(G)$ where $V_S,E_{S}$ are the set of vertices and edges respectively not on any branch of $\mathcal{B}_S(G)$. If $\mathcal{G}=(V\setminus V_{S},E\setminus E_{S},\omega)$ then the spectra $\sigma(G)$ and $\sigma(\mathcal{G})$ differ at most by $\mathcal{N}(G;S)$.
\end{proposition}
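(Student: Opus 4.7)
The plan is to make rigorous the two observations outlined in the paragraphs just before the proposition: (a) edges of $E_S$ either cross between distinct strongly connected components or are loops sitting at singleton components, and (b) such edges and such singleton components can be peeled off the block-triangular form of $M(G)$ at a spectral cost controlled by $\mathcal{N}(G;S)$. I would carry this out in three steps.

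First I would prove the classification statement. Since $S\in st(G)$, the subgraph induced by $\bar S$ is acyclic in $\ell(G)$, so any non-loop cycle of $G$ contains at least one vertex of $S$ and hence decomposes as a concatenation of branches in $\mathcal{B}_S(G)$. Consequently every edge lying on a non-loop cycle lies on some branch. Contrapositively, an edge $e_{ik}\in E_S$ either is a loop or lies on no cycle at all; in the latter case $v_i$ and $v_k$ belong to different strongly connected components. Similarly, a vertex $v_j\in V_S$ is on no branch and therefore on no non-loop cycle, so it is a singleton strongly connected component carrying at most the loop $e_{jj}$.

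Next I would reorder the vertex set so that $M(G)$ takes the block lower-triangular form displayed before the proposition, with the singleton components $\{v_j\}$ for $v_j\in V_S$ appearing as $1\times 1$ diagonal blocks. Every edge of $E_S$ that crosses between components corresponds to an entry strictly below the diagonal blocks, so setting all such entries to $0$ preserves $\det(M(G)-\lambda I)=\prod_k\det(M(\mathbb{S}_k)-\lambda I)$, and therefore preserves $\sigma(G)$ exactly. After this zeroing, removing a singleton component $\{v_j\}\subseteq V_S$ (together with its loop) simply strips the factor $\omega(e_{jj})-\lambda$ from the product, exactly as in equation \eqref{eq3}. The roots in $\mathbb{W}$ of $\omega(e_{jj})-\lambda=0$ and the $\lambda$ at which $\omega(e_{jj})$ is undefined are by definition contained in $\mathcal{N}(G;S)$.

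Iterating the removal across all $v_j\in V_S$ yields $\mathcal{G}$, and the symmetric difference between $\sigma(G)$ and $\sigma(\mathcal{G})$ is contained in the union $\bigcup_{v_j\in V_S}\mathcal{N}(G;\{v_j\})\subseteq\mathcal{N}(G;S)$. The only subtle point — and the step I expect to be the main obstacle — is bookkeeping the spectrum as a list in $\mathbb{W}$ rather than as a set in $\mathbb{C}$: when we pass from $\det(M(G)-\lambda I)$ to its counterpart for $\mathcal{G}$, we must account both for numerator zeros introduced or removed by the factor $\omega(e_{jj})-\lambda$ and for possible cancellations against denominators of rational-function entries, which is precisely why the definition of $\mathcal{N}(G;S)$ includes not only solutions of $\lambda=\omega(e_{ii})$ but also poles of $\omega(e_{ii})$. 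Once this bookkeeping is done carefully for the $1\times 1$ blocks, the proposition follows.
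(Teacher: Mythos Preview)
Your proposal is correct and follows essentially the same argument the paper gives in the paragraphs immediately preceding the proposition (which constitute its proof): classify edges/vertices off the branch set via the block-triangular form of $M(G)$, remove the off-diagonal entries for free, and then strip the $1\times 1$ diagonal blocks at the cost of the factors $\omega(e_{jj})-\lambda$, each contributing only elements of $\mathcal{N}(G;S)$. Your version is in fact somewhat more explicit about the list/multiplicity bookkeeping in $\mathbb{W}$ than the paper's, but the route is the same.
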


For ease of notation we adopt the following. For a square matrix $M$ denote by $[M]_{ij}$ the \textit{minor} of $M$ given by the determinant of the reduced matrix formed by omitting the $i$th row and $j$th column of $M$. Continuing in this manner we let $[[M]_{ij}]_{kl}=[M]_{ij,kl}$ be the determinant when row $i$, column $j$ are omitted then row $k$, column $j$ and so on. We now give a proof of lemma \ref{lemma0}. 

\begin{proof}
Let $H=(V,E,\omega)$ where $V=\{v_1,\dots,v_n\}$, $n\geq 3$ and $e_{1i},e_{i1}\notin E$ for $2\leq i\leq n$. Also let $\omega(e_{11})=\omega(e_{22})$, and $e_{32}\in E$. From $H$ we construct the graph $\tilde{H}$ first by switching the edge $e_{32}$ to $e_{31}$ while maintaining its edge weight. Second, if $\mathcal{O}(v_i)=\{v_j\in V:e_{ij}\in E, j\neq i\}$ make $\mathcal{O}(v_1)$ the same as $\mathcal{O}(v_2)$ such that $\omega(e_{1i})=\omega(e_{2i})$ for all $3 \leq i \leq n$. Let the resulting graph be the graph $\tilde{H}$ (see figure 6).

\begin{figure}
  \begin{center}
    \begin{overpic}[scale=.45]{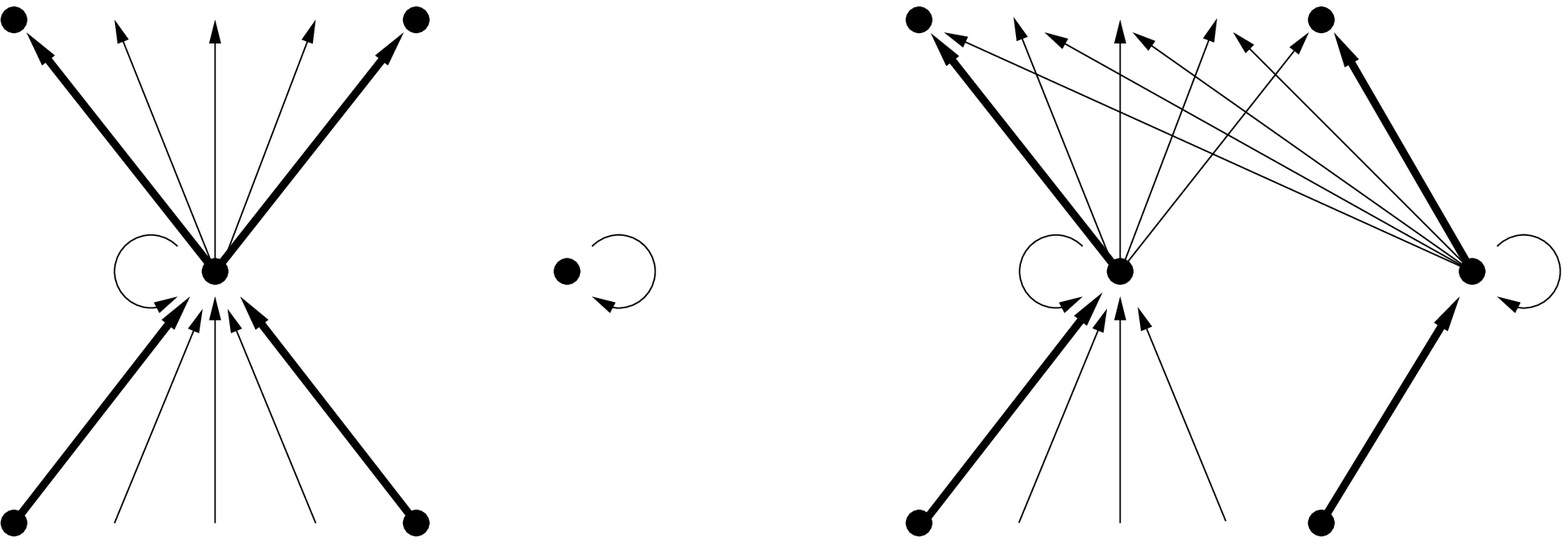}
    \put(14,-5){$H$}
    \put(75,-5){$\tilde{H}$}
    \put(30,16){$v_1$}
    \put(87,16){$v_1$} 
    \put(18,16){$v_2$}
    \put(75,16){$v_2$}
    \put(30,0){$v_3$}
    \put(91,0){$v_3$}
    \put(-5,0){$v_4$}
    \put(53,0){$v_4$}
    \put(0,25){$\beta_2$}
    \put(24,25){$\beta_1$}
    \put(58,25){$\beta_2$}
    \put(92,25){$\beta_1$}
    \end{overpic}
  \end{center}
  \caption{Separation of branches $\beta_1$ and $\beta_2$.}
\end{figure}

If $M=M(H)-\lambda I$ and $\tilde{M}=M(\tilde{H})-\lambda I$ the claim is that $\det(M)=\det(\tilde{M})$. To see this note that $[\tilde{M}]_{31,22}=[\tilde{M}]_{11,32}$ and as well that $[\tilde{M}]_{31,i2}=0$ for all $4\leq i \leq n$ since the first two rows in the associated matrices are identical. If $\omega(e_{ij})=\omega_{ij}$ for all $e_{ij}\in E$ then
\begin{align*}
&\det(\tilde{M})=(\omega_{22}-\lambda)[\tilde{M}]_{11}+\omega_{32}[\tilde{M}]_{31}=\\
&(\omega_{22}-\lambda)\big((\omega_{22}-\lambda)[\tilde{M}]_{11,22}+\sum_{i=4}^n(-1)^i \omega_{i2}[\tilde{M}]_{11,i2}\big)-\omega_{32}(\omega_{22}-\lambda)[\tilde{M}]_{31,22}=\\
&(\omega_{22}-\lambda)\big((\omega_{22}-\lambda)[\tilde{M}]_{11,22}+\sum_{i=3}^n(-1)^i \omega_{i2}[\tilde{M}]_{11,i2}\big).
\end{align*}
Since $[\tilde{M}]_{11,i2}=[M]_{11,i2}$ for $2 \leq i \leq n$ then 
\begin{align*}
&\det(M)=(\omega_{22}-\lambda)[M]_{11}=\\
&(\omega_{22}-\lambda)\big((\omega_{22}-\lambda)[M]_{11,22}+\sum_{i=3}^n(-1)^i \omega_{i2}[M]_{11,i2}\big)=\det(\tilde{M}).
\end{align*}
Hence, $\sigma(H)=\sigma(\tilde{H})$. Moreover, it follows that if $S\in st(H)$ where $v_1,v_2\notin S$ then $S\in st(\tilde{H})$ and both $H$ and $\tilde{H}$ have the same branch decomposition with respect to $S$.

To use this construction to our advantage let $\mathcal{G}=(\mathcal{V},E,\omega)$ such that $\mathcal{V}=\{v_2,\dots,v_n\}$, $n\geq 3$. If $\mathcal{G}_1=(\{v_1\},\{e_{11}\},\mu)$, $\mu(e_{11})=\omega(e_{22})$, and $G=\mathcal{G}_1\cup\mathcal{G}$ then for any $S\in st(\mathcal{G})$ where $v_2\notin S$ it follows that $S\in st(G)$. 

Suppose then that $\beta_1,\beta_2\in\mathcal{B}_S(G)$ are distinct but not independent where their first interior point of intersection is the vertex $v_2$. If it is the case that the vertex preceding $v_2$ on $\beta_1$ is $v_3$ and the vertex preceding $v_2$ on $\beta_2$ is $v_4$ where $v_3\neq v_4$ then $G$ has the structure of $H$ in the figure 6. Applying the same procedure to $G$ as we did to $H$ we obtain the graph $\tilde{G}$ having the same spectrum as $G$ where $S\in st(\tilde{G})$ such that both graphs have the same branch decomposition with respect to $S$. Moreover, the branches $\beta_1$ and $\beta_2$ in $\tilde{G}$ no longer meet at $v_2$ but at some vertex following $v_2$, if at all.  

\begin{figure}
  \begin{center}
    \begin{overpic}[scale=.45]{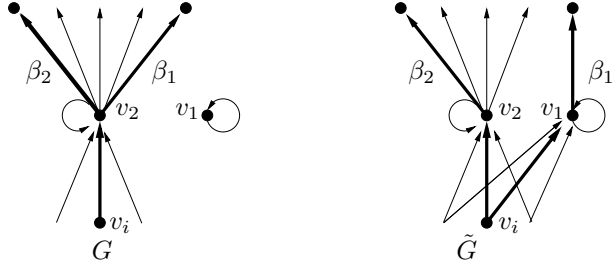}
    \put(14,-5){$G$}
    \put(75,-5){$\tilde{G}$}
    \put(28,19){$v_1$}
    \put(89,19){$v_1$} 
    \put(18,19){$v_2$}
    \put(82,19){$v_2$}
    \put(17,0){$v_i$}
    \put(82,0){$v_i$}
    \put(3,25){$\beta_2$}
    \put(24,25){$\beta_1$}
    \put(67,25){$\beta_2$}
    \put(97,25){$\beta_1$}
    \end{overpic}
  \end{center}
  \caption{Separation of branches $\beta_1$ and $\beta_2$.}
\end{figure}

If the vertex $v_3=v_4$, then under the assumption that $\beta_1\neq\beta_2$, by a relabeling of the vertices in $G$, with the exception of $v_1$, let $v_2$ be the first vertex on $\beta_1$ and $\beta_2$ where the following vertex on these branches differ. If $G^T$ is the graph with adjacency matrix $M(G)^T$ (where $T$ denotes transpose) then $\sigma(G)=\sigma(G^T)$ and $S\in st(G^T)$. Moreover, the branches $\beta_1^T,\beta_2^T\in\mathcal{B}_S(G^T)$ corresponding to $\beta_1,\beta_2\in\mathcal{B}_S(G)$ can be separated at $v_2$ again using the method above. If this modified graph is denoted $\tilde{G}^T$ then the graph $\tilde{G}$ with adjacency matrix $M(\tilde{G}^T)^T$ (see figure 7) and $G$ again have the same spectrum as well as having the same branch decomposition with respect to $S$. As before, $\beta_1$ and $\beta_2$ in $\tilde{G}$ meet at some point following $v_2$, if at all.

Since the subgraph $\mathcal{G}_1$ of $G$ is a strongly connected component with single vertex $v_1\notin S$ then proposition \ref{prop1} implies $\sigma(\mathcal{G})$ and $\sigma(G)$ differ at most by $\mathcal{N}(G;S)$ which inturn implies the same for $\sigma(\mathcal{G})$ and $\sigma(\tilde{G})$. Let $\tilde{\mathcal{X}}$ denote the graph given by repeating this process over all branches in $\mathcal{B}_S(G)$ until each is independent of all other branches. Then $D_S(\mathcal{G})=D_S(\tilde{X})$ and $\sigma(\mathcal{G})$ and $\sigma(\tilde{\mathcal{X}})$ differ at most by $\mathcal{N}(G;S)$. 

Again proposition \ref{prop1} can be used to remove any vertex or edge not belonging to a branch of $\mathcal{B}_S(\tilde{\mathcal{X}})$ without effecting the spectrum of $\tilde{\mathcal{X}}$ by more than $\mathcal{N}(G;S)$. The removal of these edges and vertices produces the graph $\mathcal{X}$ where $\mathcal{X}$ is a branch expansion of $\mathcal{G}$ with the desired properties.
\end{proof}

We now give a proof of lemma \ref{lemma2}.

\begin{proof} We proceed as in the previous proof. Let $\mathcal{G}=(V,E,\omega)$ have vertex set $V=\{v_2,\dots,v_n\}$, $n\geq 3$ where $e_{23}$ is an edge with weight $\omega_{21}\omega_{23}/(\lambda-\omega_{11})$. Let $\tilde{G}$ be the graph $\mathcal{G}$ with loop bisected edge $e_{23}$ with intermediate vertex $v_1$. That is, $\tilde{G}$ has vertex set $\{v_1,\dots,v_n\}$ where $\omega(e_{11})=\omega_{11}$, $\omega(e_{21})=\omega_{21}$, and $\omega(e_{13})=\omega_{13}$. 

The claim is that $\det(M(\mathcal{G})-\lambda I)=(\omega_{11}-\lambda)\det(M(\tilde{G})-\lambda I)$. To see this let $G=\mathcal{G}_1\cup\mathcal{G}$ where $\mathcal{G}_1=(\{v_1\},\emptyset)$ as well as $\omega(e_{ij})=\omega_{ij}$ and consider the two matrices $M=M(G)-\lambda I$ and $\tilde{M}=M(\tilde{G})-\lambda I$. Assuming $v_2\neq v_3$ 
\begin{align*}
&-\lambda(\det(\tilde{M}))=-\lambda\big((\omega_{11}-\lambda)[\tilde{M}]_{11}-\omega_{21}[\tilde{M}]_{21}\big)=\\
&-\lambda\big((\omega_{11}-\lambda)\big((\omega_{22}-\lambda)[\tilde{M}]_{11,22}+\sum_{i=4}^n(-1)^i\omega_{2i}[\tilde{M}]_{11,2i}\big)-\omega_{21}(-\omega_{13}[\tilde{M}]_{21,13})\big).
\end{align*}
On the other hand,
\begin{align*}
&(\omega_{11}-\lambda)\det(M)=(\omega_{11}-\lambda)(-\lambda)[M]_{11}=\\
&-\lambda(\omega_{11}-\lambda)\big((\omega_{22}-\lambda)[M]_{11,22}-\frac{\omega_{21}\omega_{13}}{\omega_{11}-\lambda}[M]_{11,23}+\sum_{i=4}^n(-1)^i\omega_{2i}[M]_{11,2i}\big)=\\
&-\lambda\big((\omega_{11}-\lambda)(\omega_{22}-\lambda)[M]_{11,22}+\omega_{21}\omega_{13}[M]_{11,23}+(\omega_{11}-\lambda)\sum_{i=4}^n(-1)^i\omega_{2i}[M]_{11,2i}\big).
\end{align*}
As $[\tilde{M}]_{21,13}=[\tilde{M}]_{11,23}$ and $[\tilde{M}]_{11,2i}=[M]_{11,2i}$ for $i\geq 2$ then $(\omega_{11}-\lambda)\det(M)=-\lambda\det(\tilde{M})$. Since $-\lambda\det(M(\mathcal{G})-\lambda I)=\det(M)$ then this implies $$-\lambda(w_{11}-\lambda)\det(M(\mathcal{G})-\lambda I)=-\lambda\det(\tilde{M})$$ verifying the claim as $-\lambda$ can be removed by multiplying by its inverse in $\mathbb{W}$.

An argument similar to that in the proof of proposition \ref{prop1} (see equation (\ref{eq3})) implies $\sigma(\mathcal{G})$ and $\sigma(\tilde{G})$ differ at most by $\mathcal{N}(G;S)$. And if it is the case that $v_2=v_3$ a similar argument implies the same result completing the proof.
\end{proof}

We note that the preceding two proofs are enough to prove theorem \ref{theorem1} via the method described in section 4.3. We now turn to the proof of theorem \ref{theorem3} for which we will need the following.

\begin{lemma}\label{lemma-1}
If $G=(V,E,\omega)$ is a graph in $\mathbb{G}_\pi$ where $S\in st(G)$ then $\mathcal{R}_S(G)\in\mathbb{G}_\pi$. Moreover, if $V=\{v_1,\dots,v_n\}$ where $n\geq 2$ then $V\setminus\{v_i\}\in st(G)$ for all $1\leq i\leq n$.
\end{lemma}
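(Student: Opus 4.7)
The plan splits along the two assertions; both reduce to elementary degree bookkeeping in the field $\mathbb{W}$. A useful preliminary observation is that $\pi(p/q)=\deg(p)-\deg(q)$ is independent of the representative, since cancelling $g=\gcd(p,q)$ shrinks numerator and denominator by the same amount $\deg(g)$. Hence I may compute with unreduced representatives throughout.

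\textbf{Showing $\mathcal{R}_S(G)\in\mathbb{G}_\pi$.} I would bound $\pi$ of a single branch product first. Let $\beta=v_{i_1},\dots,v_{i_m}$ with $m\geq 2$, so $\mathcal{P}_\omega(\beta)=\omega(e_{i_1i_2})\prod_{k=2}^{m-1}\omega(e_{i_ki_{k+1}})/(\lambda-\omega(e_{i_ki_k}))$. For any interior loop weight $\omega(e_{kk})=p/q$ with $\pi(p/q)\leq 0$, write $\lambda-\omega(e_{kk})=(\lambda q-p)/q$; since $\deg(\lambda q)=\deg(q)+1>\deg(p)$, the numerator has degree exactly $\deg(q)+1$, so $\pi(\lambda-\omega(e_{kk}))=1$. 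Each edge weight $\omega(e_{i_ki_{k+1}})$ has $\pi\leq 0$ by hypothesis, so each factor past the leading $\omega(e_{i_1i_2})$ contributes $\pi\leq -1$, giving $\pi(\mathcal{P}_\omega(\beta))\leq 0$ for all $m\geq 2$ (trivially when $m=2$). Next, $\mu(e_{ij})=\sum_{\beta}\mathcal{P}_\omega(\beta)=\sum_\beta p_\beta/q_\beta$; brought to the common denominator $Q=\prod_\beta q_\beta$, the resulting numerator has degree at most $\max_\beta\bigl(\deg(p_\beta)+\deg(Q)-\deg(q_\beta)\bigr)\leq\deg(Q)$, using $\deg(p_\beta)\leq\deg(q_\beta)$. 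Thus $\pi(\mu(e_{ij}))\leq 0$ for every entry of $M(\mathcal{R}_S(G))$, proving $\mathcal{R}_S(G)\in\mathbb{G}_\pi$.

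\textbf{Showing $V\setminus\{v_i\}\in st(G)$.} The complement in $V$ is the single vertex $\bar S=\{v_i\}$. Since a cycle requires at least two distinct vertices by the definition in Section 2, and $\ell(G)$ has loops removed, $\{v_i\}$ induces no cycle in $\ell(G)$. For the loop-weight condition of Definition \ref{def1}, if $\omega(e_{ii})=\lambda$ then $\pi(\omega(e_{ii}))=1>0$, contradicting $G\in\mathbb{G}_\pi$; hence $\omega(e_{ii})\neq\lambda$. Finally $V\setminus\{v_i\}$ is nonempty because $n\geq 2$, so all conditions of Definition \ref{def1} are met.

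The only conceptual point demanding care is the interplay between cancellation in $\mathbb{W}$ and the $\pi$ bound; once one observes that $\pi$ is a well-defined function on elements of $\mathbb{W}$ rather than on representatives, both parts collapse into the degree inequalities above. The second assertion is essentially a tautology given the $\mathbb{G}_\pi$ hypothesis, and its role is to guarantee that the peeling-off-one-vertex reductions used in Theorems \ref{theorem3} and \ref{theorem-1} are always available.
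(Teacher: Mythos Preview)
Your proof is correct and follows essentially the same approach as the paper: both establish the closure of $\mathbb{G}_\pi$ under reduction by tracking $\pi$ through products, sums, and division by $\lambda-w$, and both dispatch the second assertion by noting a single vertex cannot induce a cycle in $\ell(G)$ and that $\pi(\omega(e_{ii}))\leq 0$ rules out $\omega(e_{ii})=\lambda$. You are somewhat more explicit than the paper about the invariance of $\pi$ under cancellation and the common-denominator computation for sums, but the underlying argument is the same degree bookkeeping.
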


\begin{proof}
Let $w_1=p_1/q_1$, $w_2=p_2/q_2$, and $w_3=p_3/q_3$ be elements of $\mathbb{W}$ such that $\pi(w_1),\pi(w_2)$, $\pi(w_3)\leq 0$. As $$\pi(w_1+w_2)=\pi\Big(\frac{p_1q_2+p_2q_1}{q_1q_2}\Big)\leq\max\{\pi(w_1),\pi(w_2)\},$$
$$\pi(w_1w_2)=\pi\Big(\frac{p_1p_2}{q_1q_2}\Big)=\pi(w_1)+\pi(w_2), \ \text{and}$$
$$\pi\Big(\frac{w_1w_2}{\lambda-w_3}\Big)=\pi\Big(\frac{p_1p_2q_3}{q_1q_2(q_3\lambda-p_3)}\Big)<\pi(w_1)+\pi(w_2)$$ are each nonpositive then the first statement of lemma \ref{lemma-1} follows from definition \ref{branchprod} and \ref{reductiondef}.

Consider then a single vertex $v_i$ of $G=(V,E,\omega)$ where $G$ is assumed to be in $\mathbb{G}_\pi$. As $v_i$ cannot induce any cycles in $\ell(G)$ and $\omega(e_{ii})\neq\lambda$ given $\pi(\omega(e_{ii}))\leq 0$ then $V\setminus\{v_i\}\in st(G)$ so long as $V$ has more than one vertex. The result then follows.
\end{proof}

Let $G=(V,E,\omega)$ be a graph in $\mathbb{G}_\pi$ with $V=\{v_1,\dots,v_n\}$. Taking the perspective that reductions amount to removing vertices from a graph then, by way of notation, if $\bar{\mathcal{R}}(G;v_i)=\mathcal{R}_{V\setminus\{v_i\}}(G)$ then inductively define $$\bar{\mathcal{R}}(G;v_1,\dots,v_i)=\bar{\mathcal{R}}(\bar{\mathcal{R}}(G;v_1,\dots,v_{i-1});v_i) \ \text{for} \ 1<i< n.$$
Note that the graph $\bar{\mathcal{R}}(G;v_1,\dots,v_i)$ is well defined by lemma \ref{lemma-1} for each $1<i< n$.

\begin{lemma}\label{lemma4}
Let $G=(V,E,\omega)$ where $G\in\mathbb{G}_\pi$. If $S\in st(G)$, $V=\{v_1,\dots,v_n\}$, and $\bar{S}=\{v_k,\dots,v_n\}$ where $2\leq k\leq n$ then for any permutation $\rho$ on $\{k,\dots,n\}$, $\mathcal{R}_S(G)=\bar{\mathcal{R}}(G;v_{\rho(k)},\dots,v_{\rho(n)})$.
\end{lemma}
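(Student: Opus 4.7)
The plan is to induct on $r = n - k + 1 = |\bar S|$, the number of vertices being removed. The base case $r = 1$ is immediate: $\rho$ must be the identity, and $\bar{\mathcal{R}}(G; v_n) = \mathcal{R}_{V \setminus \{v_n\}}(G) = \mathcal{R}_S(G)$ directly from the definitions. For the inductive step, set $v := v_{\rho(k)}$ and $G' := \bar{\mathcal{R}}(G; v)$; Lemma \ref{lemma-1} supplies $V \setminus \{v\} \in st(G)$ and $G' \in \mathbb{G}_\pi$. I would first verify that $S \in st(G')$: the loop condition $\omega'(e_{uu}) \neq \lambda$ for $u \in \bar{S}' := \bar S \setminus \{v\}$ is automatic since $\pi(\omega'(e_{uu})) \leq 0 < \pi(\lambda)$, and any cycle of $\ell(G')$ with vertices in $\bar{S}'$ lifts -- by replacing each edge of $G'$ either with the corresponding edge of $G$ or with the $2$-step detour through $v$ -- to a closed walk inside $\bar S$ within $\ell(G)$, and such a walk contains a simple cycle, contradicting $S \in st(G)$. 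The inductive hypothesis applied to $G'$ (whose complement of $S$ now has size $r - 1$) then gives $\bar{\mathcal{R}}(G; v_{\rho(k)}, \ldots, v_{\rho(n)}) = \bar{\mathcal{R}}(G'; v_{\rho(k+1)}, \ldots, v_{\rho(n)}) = \mathcal{R}_S(G')$, so the lemma reduces to the single identity
\[
\mathcal{R}_S(G) \;=\; \mathcal{R}_S(G'). \qquad (\star)
\]

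To prove $(\star)$ I would compare edge weights directly. Fix $v_i, v_j \in S$, and group the branches in $\mathcal{B}_{ij}(G; S)$ by their \emph{skeleton} $\sigma$, the vertex sequence obtained from $\beta$ by deleting $v$ whenever it appears as an interior vertex. Each skeleton has the form $v_i, u_1, \ldots, u_p, v_j$ with $u_s \in \bar{S}'$ (possibly $p = 0$), and its preimages under this projection are $\sigma$ itself together with the $p + 1$ insertions of $v$ at positions $t = 0, 1, \ldots, p$. It then suffices to show, for each skeleton $\sigma$, the identity
\[
\mathcal{P}_{\omega'}(\sigma) \;=\; \sum_{\beta\,\text{projects to}\,\sigma} \mathcal{P}_{\omega}(\beta),
\]
since summation over $\sigma$ would yield $\mu'(e_{v_i v_j}) = \mu(e_{v_i v_j})$ for the edge weights $\mu$ of $\mathcal{R}_S(G)$ and $\mu'$ of $\mathcal{R}_S(G')$.

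Two consequences of $S \in st(G)$ drive this identity. First, $\omega(e_{uv})\omega(e_{vu}) = 0$ for every $u \in \bar{S}'$ (else $u \to v \to u$ is a $2$-cycle in $\bar S$ inside $\ell(G)$), so $\omega'(e_{uu}) = \omega(e_{uu})$ and the denominators in $\mathcal{P}_{\omega'}(\sigma)$ agree with those encountered along branches of $G$. Second, expanding
\[
\prod_{s=0}^{p}\omega'(e_{u_s, u_{s+1}}) \;=\; \prod_{s=0}^{p}\bigl(\omega(e_{u_s, u_{s+1}}) + \omega(e_{u_s, v})\,\omega(e_{v, u_{s+1}})/(\lambda - \omega(e_{vv}))\bigr)
\]
produces $2^{p+1}$ monomials indexed by subsets $T \subseteq \{0, \ldots, p\}$ of positions ``routed through $v$''. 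The main obstacle is showing that every $T$ with $|T| \geq 2$ contributes a vanishing monomial. Choose two consecutive members $s_1 < s_2$ of such a $T$: positions strictly between them lie outside $T$, so nonvanishing of the monomial forces all intervening $\omega(e_{u_s, u_{s+1}})$ (for $s_1 < s < s_2$) to be nonzero, hence the direct edges $u_{s_1+1} \to u_{s_1+2} \to \cdots \to u_{s_2}$ all sit in $G$. Together with $v \to u_{s_1+1}$ (from $s_1 \in T$) and $u_{s_2} \to v$ (from $s_2 \in T$) -- or, when $s_2 = s_1 + 1$, just the $2$-cycle $u_{s_2} \to v \to u_{s_2}$ -- this yields a cycle in $\bar S$ inside $\ell(G)$, contradicting $S \in st(G)$. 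Only the $|T| = 0$ term and the $|T| = \{t\}$ terms survive, and they match precisely $\mathcal{P}_\omega(\sigma)$ and $\mathcal{P}_\omega$ of the insertion at position $t$ respectively; summing over $\sigma$ completes the proof of $(\star)$ and closes the induction.
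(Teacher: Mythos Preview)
Your proof is correct but runs the induction in the opposite direction from the paper. The paper establishes the one-step identity $\mathcal{R}_S(G)=\bar{\mathcal{R}}\bigl(\mathcal{R}_{S\cup\{v_k\}}(G);v_k\bigr)$ --- first reducing over the larger structural set $S_k=S\cup\{v_k\}$ and then peeling off $v_k$ --- and iterates upward from $S$ to $V$. You instead prove $\mathcal{R}_S(G)=\mathcal{R}_S\bigl(\bar{\mathcal{R}}(G;v)\bigr)$, removing one vertex first and then reducing over $S$, and induct downward on $|\bar S|$. The paper's combinatorial core is shorter: after reducing over $S_k$, every branch over $S$ has at most the single interior vertex $v_k$, so the comparison amounts to splitting each branch of $G$ at $v_k$. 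Your route requires more work, since branches of $G'$ over $S$ can be long, forcing the skeleton/insertion bookkeeping and the $|T|\geq 2$ vanishing argument; you also need a direct verification that $S\in st(G')$, whereas the paper gets $S\in st(\mathcal{R}_{S_k}(G))$ for free from Lemma~\ref{lemma-1}. On the other hand, your argument makes the role of the acyclicity hypothesis on $\bar S$ completely explicit (both for $S\in st(G')$ and for killing the cross-terms), while the paper invokes it only tacitly when treating the concatenation $(\alpha,\gamma)\mapsto\alpha*\gamma$ as a bijection between $\mathcal{B}_{ik}(G,S_k)\times\mathcal{B}_{kj}(G,S_k)$ and $\mathcal{B}^k_{ij}(G,S)$. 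One small expository point: when you sum your identity over $\sigma$, it is cleanest to let $\sigma$ range over all \emph{potential} skeletons (sequences with distinct interior vertices in $\bar S'$), so that non-branches on either side simply contribute zero and no separate argument is needed that every branch of $G'$ arises as the skeleton of some branch of $G$.
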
 That is, any reduction over a structural set $S$ can be accomplished by sequentially removing vertices in $\bar{S}$ in any order without effecting the resulting graph. 

In the proof of this lemma we use the following. If $\alpha,\gamma\in\mathcal{B}_S(G)$ where $\alpha=v_1,\dots,v_m$ and $\gamma=v_m,\dots,v_n$ then let $\beta=\alpha*\gamma$ be the \textit{concatenation} of these branches given by $\alpha*\gamma=v_1,\dots,v_m,\dots,v_n$. Note that the branch $\emptyset\in\mathcal{B}_S(G)$ is the \textit{empty branch} with branch product 0. We now give a proof of lemma \ref{lemma4}.

\begin{proof}
Let $G$ and $S$ be as in the statement of lemma \ref{lemma4}. For the sake of notation let $S\cup\{v_k\}=S_k$ and note that as $\bar{S}$ induces no cycles in $\ell(G)$ then the same is true of $\bar{S_k}$ implying $S_k\in st(G)$. From this and lemma \ref{lemma-1} it then follows that $S\in st(\mathcal{R}_{S_k}(G))$. Therefore, let $\mathcal{R}_{S_k}(G)=(S_k,\tilde{\mathbb{E}},\tilde{\nu})$, $\mathcal{R}_S(\mathcal{R}_{S_k}(G))=(S,\mathbb{E},\nu)$, and $\mathcal{R}_S(G)=(S,\mathcal{E},\mu)$. Also for any graph $H=(\mathcal{V},F,\tau)$ with structural set $T$ let $\mathcal{B}_{ij}^k(H;T)$ be the branches in $\mathcal{B}_{ij}(H;T)$ containing the vertex $v_k\in \mathcal{V}$. 

Then for $1\leq i,j<k$ note that $\mathcal{B}_{ij}(\mathcal{R}_{S_k}(G);S)$ consists of at most two branches $\beta_1,\beta_2$ where $\beta_1\in\mathcal{B}_{ij}^k(\mathcal{R}_{S_k}(G),S) \ \text{and} \ \beta_2\notin\mathcal{B}_{ij}^k(\mathcal{R}_{S_k}(G),S)$ where either branch is possibly empty. Hence, $\nu(e_{ij})=\mathcal{P}_{\tilde{\nu}}(\beta_1)+\mathcal{P}_{\tilde{\nu}}(\beta_2)$.

In what follows let $\mathcal{B}^2(G,S_k)$ be the direct product $\mathcal{B}_{ik}(G,S_k)\times\mathcal{B}_{kj}(G,S_k)$. As $\beta_1=v_i,v_k,v_j$ and $\beta_2=v_i,v_j$ if each are nonempty then 
$$\mathcal{P}_{\tilde{\nu}}(\beta_1)=\displaystyle{\sum_{(\alpha,\gamma)\in\mathcal{B}^2(G,S_k)}\frac{\mathcal{P}_{\omega}(\alpha)\mathcal{P}_{\omega}(\gamma)}{\lambda-\omega(e_{kk})}}, \ \ \ \ \  \mathcal{P}_{\tilde{\nu}}(\beta_2)=\displaystyle{\sum_{\beta\notin\mathcal{B}_{ij}^k(G,S_k)}\mathcal{P}_{\omega}(\beta)}.$$
On the other hand, for $1\leq i,j<k$
$$\mu(e_{ij})=\displaystyle{\sum_{\beta\in\mathcal{B}_{ij}^k(G,S)}\mathcal{P}_{\omega}(\beta)+\sum_{\beta\notin\mathcal{B}_{ij}^k(G,S)}\mathcal{P}_{\omega}(\beta)}.$$
Note that if $\beta\in\mathcal{B}_{ij}^k(G,S)$ then there exists a unique pair $(\alpha,\gamma)\in\mathcal{B}^2(G,S_k)$ such that $\alpha*\gamma=\beta$. Hence, $\mathcal{P}_\omega(\beta)=\mathcal{P}_{\omega}(\alpha)\mathcal{P}_{\omega}(\gamma)/(\lambda-\omega(e_{kk}))$. Moreover, as 
$$\sum_{\beta\notin\mathcal{B}_{ij}^k(G,S)}\mathcal{P}_{\omega}(\beta)=\sum_{\beta\notin\mathcal{B}_{ij}^k(G,S_k)}\mathcal{P}_{\omega}(\beta)$$ then $\mu(e_{ij})=\nu(e_{ij})$ for all $1\leq i,j<k$ implying 
\begin{equation}\label{eq5}
\displaystyle{\mathcal{R}_S(G)=\bar{\mathcal{R}}(\mathcal{R}_{S\cup\{v_k\}}(G);v_k)}.
\end{equation}

To see how this is useful let $S\cup\{v_k,\dots,v_i\}=S_i$ for $k\leq i\leq n$. As $S_{k+1}=S_k\cup\{v_{k+1}\}\in st(G)$ for the same reason $S_k\in st(G)$ then by adjusting the indices in equation (\ref{eq5}) it follows that
$$\mathcal{R}_{S_k}(G)=\bar{\mathcal{R}}(\mathcal{R}_{S_{k+1}}(G);v_{k+1}).$$
By inserting this back into (\ref{eq5}), $\mathcal{R}_S(G)=$
$$\bar{\mathcal{R}}(\bar{\mathcal{R}}(\mathcal{R}_{S_{k+1}}(G);v_{k+1});v_k)=\bar{\mathcal{R}}(\mathcal{R}_{S_{k+1}}(G);v_{k+1},v_k).$$
Continuing in this way we arrive at
$$\mathcal{R}_S(G)=\bar{\mathcal{R}}(\mathcal{R}_{S_{n}}(G);v_n,\dots,v_k)=\bar{\mathcal{R}}(G;v_n,\dots,v_k)$$
where the final equality follows from the fact that $S_{n}=V$. As the labeling of $\bar{S}$ was arbitrary then $\mathcal{R}_S(G)=\bar{\mathcal{R}}(G;v_{\rho(k)},\dots,v_{\rho(n)})$ for any permutation $\rho$ on $\{k,\dots,n\}$.
\end{proof} 

\begin{lemma}\label{lemma5}
Let $G\in\mathbb{G}_\pi$ with vertex set $V=\{v_1,\dots,v_n\}$ and $n>2$. For any permutation $\rho$ on $\{1,\dots,i\}$ where $1\leq i<n$, $\bar{\mathcal{R}}(G;v_{1},\dots,v_{i})=\bar{\mathcal{R}}(G;v_{\rho(1)},\dots,v_{\rho(i)})$.
\end{lemma}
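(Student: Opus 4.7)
The plan is to prove the lemma by induction on $i$, reducing to the case where $\rho$ is an adjacent transposition. Since every permutation on $\{1,\dots,i\}$ factors as a product of adjacent transpositions, and since each intermediate graph in the sequence remains in $\mathbb{G}_\pi$ by lemma $\ref{lemma-1}$, it suffices to establish the following swap principle: for any $H\in\mathbb{G}_\pi$ with $|V(H)|\geq 3$ and any two distinct vertices $v_a,v_b$ of $H$,
\[
\bar{\mathcal{R}}(\bar{\mathcal{R}}(H;v_a);v_b)=\bar{\mathcal{R}}(\bar{\mathcal{R}}(H;v_b);v_a).
\]
The full lemma then follows by an inductive chain argument: in a given sequence one isolates two consecutive removals $v_{a_k},v_{a_{k+1}}$, applies the swap principle to the common intermediate graph $\bar{\mathcal{R}}(G;v_{a_1},\dots,v_{a_{k-1}})$, and re-applies the trailing removals $v_{a_{k+2}},\dots,v_{a_i}$ to the identical resulting graph.

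For the swap principle I would split into two subcases according to whether $\{v_a,v_b\}$ induces a cycle in $\ell(H)$. If it does not, then $S:=V(H)\setminus\{v_a,v_b\}$ is a structural set of $H$: its complement is $\{v_a,v_b\}$, which has no cycles in $\ell(H)$ by assumption, while the loop weights $\omega(e_{aa}),\omega(e_{bb})$ differ from $\lambda$ because $\pi(\omega(e_{ii}))\leq 0$ in $\mathbb{G}_\pi$. Lemma $\ref{lemma4}$ applied to $H$ and $S$ then yields both iterated reductions equal to $\mathcal{R}_S(H)$.

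If instead $v_a$ and $v_b$ are joined by a 2-cycle in $\ell(H)$, Lemma $\ref{lemma4}$ is not directly available and I would verify the equality by direct computation. Writing $L_x=\lambda-\omega(e_{xx})$ and $D=L_aL_b-\omega(e_{ab})\omega(e_{ba})$, expansion of the two successive reductions via definition $\ref{reductiondef}$ shows that, for all $v_x,v_y\in V(H)\setminus\{v_a,v_b\}$ (including $x=y$), the edge or loop weight of $e_{xy}$ in $\bar{\mathcal{R}}(\bar{\mathcal{R}}(H;v_a);v_b)$ equals
\[
\omega(e_{xy})+\frac{\omega(e_{xa})\omega(e_{ay})L_b+\omega(e_{xb})\omega(e_{by})L_a+\omega(e_{xa})\omega(e_{ab})\omega(e_{by})+\omega(e_{xb})\omega(e_{ba})\omega(e_{ay})}{D},
\]
which is visibly invariant under $a\leftrightarrow b$. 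Legitimacy of the denominator follows from $H\in\mathbb{G}_\pi$: here $\pi(L_aL_b)=2$ while $\pi(\omega(e_{ab})\omega(e_{ba}))\leq 0$, so $D\neq 0$ in $\mathbb{W}$.

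The hard part will be the algebraic bookkeeping in the 2-cycle subcase, namely collapsing the nested rational expressions produced by two successive single-vertex removals into the symmetric fraction over $D$ displayed above, and simultaneously verifying this for the newly created loops at vertices of $V(H)\setminus\{v_a,v_b\}$. Once this explicit symmetry is in hand, the remaining ingredients (the reduction to the swap principle, well-definedness of each step by lemma $\ref{lemma-1}$, and the application of lemma $\ref{lemma4}$ in the acyclic subcase) combine routinely to give the lemma.
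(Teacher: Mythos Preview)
Your proposal is correct and takes the same route as the paper: establish the two-vertex swap $\bar{\mathcal{R}}(H;v_a,v_b)=\bar{\mathcal{R}}(H;v_b,v_a)$ by explicitly computing the resulting edge weights, then lift to arbitrary permutations via adjacent transpositions. The paper makes no case split---it writes down a single symmetric expression with denominator $(\lambda-\omega_{11})(\lambda-\omega_{22})$ and never invokes Lemma~\ref{lemma4}---so your detour through that lemma in the acyclic subcase is valid but unnecessary, since your own formula over $D=L_aL_b-\omega(e_{ab})\omega(e_{ba})$ already handles both subcases uniformly (and in fact tracks the updated loop weight at $v_b$ after removing $v_a$ more carefully than the paper's displayed formula does).
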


\begin{proof}
Given $G=(V,E,\omega)$ let $\bar{\mathcal{R}}(G;v_1)=(V_1,\tilde{E}_1,\tilde{\nu}_1)$, $\bar{\mathcal{R}}(G;v_1,v_2)=(V_{12},E_1,\nu_1)$, and $\bar{\mathcal{R}}(G;v_2,v_1)=(V_{12},E_2,\nu_2)$. For $1\leq k,l \leq n$ let $\omega(e_{kl})=\omega_{kl}$ and note that if $3\leq i,j \leq n$ where $i\neq j$ then 
$$\tilde{\nu}_1(e_{ij})=\omega_{ij}+\frac{\omega_{i1}\omega_{1j}}{\lambda-\omega_{11}}, \ \tilde{\nu}_1(e_{i2})=\omega_{i2}+\frac{\omega_{i1}\omega_{12}}{\lambda-\omega_{11}}, \ \text{and} \ \tilde{\nu}_1(e_{2j})=\omega_{2j}+\frac{\omega_{21}\omega_{1j}}{\lambda-\omega_{11}}.$$
This  implies in particular that $\nu_1(e_{ij})=$

\begin{equation*}\label{eq6}
\omega_{ij}+\big(\frac{\omega_{i1}\omega_{1j}}{\lambda-\omega_{11}}+\frac{\omega_{i2}\omega_{2j}}{\lambda-\omega_{22}}\big)+\big(\frac{\omega_{i2}\omega_{21}\omega_{1j}}{(\lambda-\omega_{11})(\lambda-\omega_{22})}+\frac{\omega_{i1}\omega_{12}\omega_{2j}}{(\lambda-\omega_{11})(\lambda-\omega_{22})}\big).
\end{equation*}

As this equation is symmetric in the indices 1 and 2 then $\nu_1(e_{ij})=\nu_2(e_{ij})$ for $3\leq i,j \leq n$, $i\neq j$. If $i=j$ then it follows that $\nu_1(e_{ij})=\omega_{ij}=\nu_2(e_{ij})$. Hence, $\bar{\mathcal{R}}(G;v_1,v_2)=\bar{\mathcal{R}}(G;v_2,v_1)$. Note that this can be used to show
$$\bar{\mathcal{R}}(G;v_1,v_2,\dots,v_j,v_{j+1},\dots,v_i)=\bar{\mathcal{R}}(G;v_1,v_2\dots,v_{j+1},v_j,\dots,v_i), \ 1\leq j<i$$
or that any transposition in the order of the removal of vertices $v_j,v_{j+1}$ does not effect the reduction. As any permutation $\pi$ on $\{1,\dots,i\}$ can be achieved via some number of transpositions it follows that $\bar{\mathcal{R}}(G;v_{1},\dots,v_{i})=\bar{\mathcal{R}}(G;v_{\pi(1)},\dots,v_{\pi(i)})$.
\end{proof}

We now give a proof of theorem \ref{theorem3}.

\begin{proof}
Let $S_1,\dots,S_m$ and $T_1,\dots,T_n$ both induce a sequence of reductions on the graph $G\in\mathbb{G}$ where $S_m=T_n$. If $\tilde{S}_i=S_i\setminus S_{i+1}$ and $\tilde{T}_j=T_j\setminus T_{j+1}$ then label $\tilde{S}_i=\{s_1^i,\dots,s_{k_i}^i\}$, $\tilde{T}_j=\{t_1^j,\dots,t_{l_j}^j\}$ where $k_i,l_j\in\mathbb{N}$ for $1\leq i\leq m$, $1\leq j\leq n$. 

By repeated use of lemma \ref{lemma4} it follows that 
\begin{align*}
\mathcal{R}(G;S_1,\dots,S_m)=&\bar{\mathcal{R}}(G;s^1_1,s^1_2,\dots,s_{k_1}^1,s_1^2,s_2^2,\dots,s^m_{k_m}) \ \text{and}\\ \mathcal{R}(G;T_1,\dots,T_n)=&\bar{\mathcal{R}}(G;t^1_1,t^1_2,\dots,t_{l_1}^1,t_1^2,t_2^2,\dots,t^n_{l_n})
\end{align*}
since the order in which the vertices of $\tilde{S}_i$ and $\tilde{T}_j$ are removed does not matter so long as the vertices in $\tilde{S}_i$ are removed before those in $\tilde{S}_{i+1}$ and the vertices in $\tilde{T}_j$ before $\tilde{T}_{j+1}$.

Since $\bigcup_{i=1}^m\tilde{S}_i=\bigcup_{j=1}^n\tilde{T}_j$ then under the relabeling of $s^1_1,s^1_2,\dots,s^m_{k_m}$ to $v_1,\dots,v_N$ let $\pi$ be the permutation on $\{1,\dots,N\}$ such that $v_{\pi(1)},\dots,v_{\pi(N)}=t^1_1,\dots,t^n_{l_n}$. Lemma \ref{lemma5} then implies 
$\bar{\mathcal{R}}(G;s^1_1,s^1_2,\dots,s^m_{k_m})=\bar{\mathcal{R}}(G;t^1_1,t^1_2,\dots,t^n_{l_n})$. It then follows that $\mathcal{R}(G;S_1,\dots,S_m)=\mathcal{R}(G;T_1,\dots,T_n)$.
\end{proof}

For a proof of theorem \ref{theorem-1} we give the following.

\begin{proof}
Let $G=(V,E,\omega)$ be a graph in $\mathbb{G}_\pi$ where $\mathcal{V}\subseteq V$ is any nonempty set. If $\bar{\mathcal{V}}=\{v_1,\dots,v_m\}$ then lemma \ref{lemma-1} implies that $\bar{R}(G;v_1,\dots,v_m)$ is well defined completing the proof of the first statement of the theorem.

The proof of the second statement follows by a direct application of theorem \ref{theorem3}.
\end{proof}

\section{Reductions and Strongly Connected Components}
Recall from section 5 that the strongly connected components of a graph $G\in\mathbb{G}$ carry all the information necessary to compute the spectrum of $\sigma(G)$. A natural question then is to what degree graph reductions respect the structure of strongly connected components. Our first result in this direction is the following.

\begin{lemma}\label{lemma6}
For the weighted digraph $G=(V,E,\omega)$ suppose $S\in st(G)$ and $\mathbb{S}_1,\dots,\mathbb{S}_m$ are the strongly connected components of $G$. If $S_i$ is the nonempty set of vertices of $\mathbb{S}_i$ contained in $S$ then $\mathcal{R}_{S_i}(\mathbb{S}_i)$ are strongly connected components of $\mathcal{R}_S(G)$ for $1\leq i\leq m$ where $S_i\neq\emptyset$.
\end{lemma}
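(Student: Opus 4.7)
The plan is to identify $\mathcal{R}_{S_i}(\mathbb{S}_i)$ with the induced weighted subgraph of $\mathcal{R}_S(G)$ on vertex set $S_i$, and then verify that these induced subgraphs are exactly the strongly connected components of $\mathcal{R}_S(G)$. The key lemma that does most of the work is the identity $\mathcal{B}_{ij}(G;S)=\mathcal{B}_{ij}(\mathbb{S}_k;S_k)$ whenever $v_i,v_j\in S_k$.

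First I would check that $S_i\in st(\mathbb{S}_i)$. Writing $V_i=V(\mathbb{S}_i)$, the complement $V_i\setminus S_i$ is contained in $\bar{S}$, so it induces no cycles in $\ell(\mathbb{S}_i)\subseteq\ell(G)$, and the loop weights $\omega(e_{ll})$ for $v_l\in V_i\setminus S_i$ still satisfy $\omega(e_{ll})\neq\lambda$ by the corresponding condition for $G$ and $S$. Next I would prove the branch identity: given a branch $\beta=v_i,u_1,\ldots,u_m,v_j$ in $\mathcal{B}_{ij}(G;S)$ with $v_i,v_j\in S_k$, pick any path $P$ from $v_j$ back to $v_i$ inside $\mathbb{S}_k$; then for each interior vertex $u_l$, the initial segment $v_i\to\cdots\to u_l$ of $\beta$ concatenated with $u_l\to\cdots\to v_j$ followed by $P$ is a closed walk through $u_l$ and $v_i$, forcing $u_l\in\mathbb{S}_k$. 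Hence each branch of $\mathcal{B}_{ij}(G;S)$ is already a branch of $\mathcal{B}_{ij}(\mathbb{S}_k;S_k)$, and the reverse inclusion is immediate. Consequently $\mu(e_{ij})=\sum_{\beta\in\mathcal{B}_{ij}(\mathbb{S}_k;S_k)}\mathcal{P}_\omega(\beta)$, so the subgraph of $\mathcal{R}_S(G)$ induced on $S_k$ coincides with $\mathcal{R}_{S_k}(\mathbb{S}_k)$ as a weighted digraph.

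To finish I would verify strong connectedness on both sides. Any two $v_i,v_j\in S_k$ lie on a common cycle in $\mathbb{S}_k$; cutting that cycle at its successive visits to $S_k$ produces branches whose corresponding edges in $\mathcal{R}_{S_k}(\mathbb{S}_k)$ trace a cycle through $v_i$ and $v_j$, so $\mathcal{R}_{S_k}(\mathbb{S}_k)$ is strongly connected. In the opposite direction, any edge of $\mathcal{R}_S(G)$ from $v_i\in S_k$ to $v_j\in S_l$ with $k\neq l$ corresponds to an actual path in $G$, so a cycle in $\mathcal{R}_S(G)$ using such an edge would unfold, edge by branch, into a closed walk in $G$ passing through both $v_i$ and $v_j$, forcing them into the same strongly connected component of $G$ and contradicting $k\neq l$. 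Hence no strongly connected component of $\mathcal{R}_S(G)$ mixes vertices from distinct $S_k$'s, and the strongly connected components of $\mathcal{R}_S(G)$ are exactly the $\mathcal{R}_{S_i}(\mathbb{S}_i)$ with $S_i\neq\emptyset$.

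The step I expect to be most delicate is the branch-identity argument: showing that the interior vertices of a branch connecting two members of a common strongly connected component are themselves forced into that component. Once this is nailed down, the remaining bookkeeping — packaging cycles of $G$ as cycles in $\mathcal{R}_S(G)$ and vice versa — flows cleanly from this structural observation.
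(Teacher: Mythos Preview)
Your proposal is correct and follows essentially the same strategy as the paper: decompose paths and cycles of $\mathbb{S}_k$ into branches to show the $S_k$ are strongly connected in $\mathcal{R}_S(G)$, and argue that edges of $\mathcal{R}_S(G)$ between distinct $S_k$'s cannot lie on any cycle. Your branch identity $\mathcal{B}_{ij}(G;S)=\mathcal{B}_{ij}(\mathbb{S}_k;S_k)$ is an explicit step the paper omits; it is exactly what is needed to justify that $\mathcal{R}_{S_k}(\mathbb{S}_k)$ matches the induced weighted subgraph of $\mathcal{R}_S(G)$ on $S_k$, and hence the block-diagonal form the paper states immediately after the lemma.
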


\begin{proof}
Suppose $v_i,v_j$ are vertices in $S_k$ which are possibly nondistinct. Assuming $\mathbb{S}_k$ has more than one vertex then there is a path or cycle $P$, with positive length, contained in $\mathbb{S}_k$ from $v_i$ to $v_j$. As every edge in this strongly connected component must be on a cycle then $P=\beta_1*\dots *\beta_n$ where each $\beta_m\in\mathcal{B}_S(\mathbb{S}_k)$ for $1\leq m\leq n$ since every cycle in $G$ contains at least one vertex in $S$. If $b_m\in S$ is the vertex joining $\beta_{m-1}$ and $\beta_m$ there is a path or cycle $v_i,b_1,,b_2\dots,b_n,v_j$ in $\mathcal{R}_S(G)$ from $v_i$ to $v_j$. Switching the roles of $v_i$ and $v_j$ it follows that both the vertices $S_k$ belong to the same strongly connected component of $\mathcal{R}_S(G)$.

To see that these are the only vertices in this strongly connected component suppose that there is no path from $v_i$ to $v_j$ in $G$. That is, if $v_i,v_j\in S$ then $v_i\in S_a$, $v_j\in S_b$ where $a\neq b$. By similar reasoning then both vertices must be in different strongly connected components of $\mathcal{R}_S(G)$ as there can be no concatenation of branches $\beta_1*\dots *\beta_n$ from $v_i$ to $v_j$. Hence, the strongly connected component in $\mathcal{R}_{S}(G)$ containing $S_k$ contains only the vertices in $S_k$ implying $\mathcal{R}_{S_k}(\mathbb{S}_k)$ is a strongly connected component of $\mathcal{R}_{S}(G)$.

If $\mathbb{S}_k$ consists of a single vertex $v_i\notin S$ then $S_k$ is empty implying $\mathcal{R}_{S_k}(\mathbb{S}_k)$ is as well. On the otherhand if $v_i\in S$ then, from the above, $\mathcal{R}_{S_k}(\mathbb{S}_k)=\mathbb{S}_k$. The result then follows.
\end{proof}

Lemma \ref{lemma6} implies that the adjacency matrix of $\mathcal{R}_S(G)$ can be written in the block diagonal form 
$$M(\mathcal{R}_S(G))=\left[ \begin{array}{cccc}
M_{S_1}(\mathbb{S}_1)      & 0     & \dots  & 0\\
*        & M_{S_2}(\mathbb{S}_2)   &        & \vdots\\
\vdots   &       & \ddots & 0\\
*        & \dots & *      & M_{S_m}(\mathbb{S}_m) 
\end{array}
\right]$$ where $M_{S_i}(\mathbb{S}_i)$ is the adjacency matrix of $\mathcal{R}_{S_i}(\mathbb{S}_i)$ if $S_i\neq\emptyset$. That is, reductions respect the strongly connected component structure of the graph in the sense that the reduction of a strongly connected component is a again a strongly connected component in the reduced graph. 

Another characteristic of strongly connected components mentioned in section 5 is that all edges off of these components $G\in\mathbb{G}$ can be removed without effecting $\sigma(G)$. That is, if $G=(V,E,\omega)$ and $G^{scc}=(V,E^{scc},\omega)$ where $E^{scc}$ is the set of edges in $E$ belonging to some strongly connected component of $G$ then $\sigma(G)=\sigma(G^{scc})$. 

This suggests an alternate approach to finding a reduction of the graph $G$ by first removing the edges of $G$ which do not belong to any strongly connected component then reducing as usual. 

\begin{proposition}
Let $G\in\mathbb{G}$ and $S\in st(G)$. Then $\mathcal{R}_S(G^{scc})=\mathcal{R}_S(G)^{scc}$ and both $\sigma(\mathcal{R}_S(G^{scc}))$ and $\sigma(\mathcal{R}_S(G)^{scc})$ differ from $\sigma(G)$ by at most $\mathcal{N}(G;S)$.
\end{proposition}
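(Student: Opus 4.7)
The plan is to establish the graph equality $\mathcal{R}_S(G^{scc})=\mathcal{R}_S(G)^{scc}$ by a direct edge-by-edge comparison, then deduce the spectral statement from $\sigma(G)=\sigma(G^{scc})$ (recalled at the start of Section~5) together with Theorem~\ref{theorem1}. First I would check that $S\in st(G^{scc})$: removing edges from $G$ cannot create new cycles, so $\bar S$ still induces no cycles in $\ell(G^{scc})$, and every loop of $G$ is preserved in $G^{scc}$ (a loop is trivially in the SCC of its vertex), so the condition $\omega(e_{ii})\neq\lambda$ on $\bar S$ is unchanged. The same loop observation yields $\mathcal{N}(G^{scc};S)=\mathcal{N}(G;S)$.

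The key structural observation is: if $v_i,v_j\in S$ lie in the same SCC $\mathbb{S}_k$ of $G$, then every $\beta\in\mathcal{B}_{ij}(G;S)$ is contained in $\mathbb{S}_k$. Indeed, $v_j$ is reachable from $v_i$ via $\beta$ and $v_i$ is reachable from $v_j$ within $\mathbb{S}_k$, so concatenating these produces a closed walk through each interior vertex $u$ of $\beta$, forcing $u$ into $\mathbb{S}_k$ and each edge of $\beta$ into $G^{scc}$. Hence $\mathcal{B}_{ij}(G^{scc};S)=\mathcal{B}_{ij}(G;S)=\mathcal{B}_{ij}(\mathbb{S}_k;S_k)$ when $v_i,v_j\in S_k$, whereas if $v_i,v_j$ lie in distinct SCCs then $\mathcal{B}_{ij}(G^{scc};S)=\emptyset$ since no edge of $G^{scc}$ crosses between SCCs.

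From this I can read off the two reductions. The edges of $\mathcal{R}_S(G^{scc})$ are precisely those $e_{ij}$ with $v_i,v_j$ in a common $S_k$, carrying weight $\sum_{\beta\in\mathcal{B}_{ij}(\mathbb{S}_k;S_k)}\mathcal{P}_\omega(\beta)$. On the other hand, Lemma~\ref{lemma6} identifies the SCCs of $\mathcal{R}_S(G)$ with the subgraphs $\mathcal{R}_{S_i}(\mathbb{S}_i)$, so an edge $e_{ij}$ of $\mathcal{R}_S(G)$ survives in $\mathcal{R}_S(G)^{scc}$ iff $v_i,v_j$ lie in a common $S_k$; its weight $\sum_{\beta\in\mathcal{B}_{ij}(G;S)}\mathcal{P}_\omega(\beta)$ then coincides with the one above by the branch-containment step. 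Loops ($i=j$) are handled identically because any cycle through $v_i$ stays in the SCC of $v_i$. Therefore $\mathcal{R}_S(G^{scc})=\mathcal{R}_S(G)^{scc}$.

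For the spectral claim, Theorem~\ref{theorem1} applied to $(G^{scc},S)$ shows that $\sigma(\mathcal{R}_S(G^{scc}))$ differs from $\sigma(G^{scc})=\sigma(G)$ by at most $\mathcal{N}(G^{scc};S)=\mathcal{N}(G;S)$, and by the graph equality the same bound holds for $\sigma(\mathcal{R}_S(G)^{scc})$. I expect the only real obstacle to be making the branch-containment argument precise; everything else is bookkeeping, with one minor care point being that the convention $\omega(e)=0\iff e\notin E$ ensures nonzero loops persist in $G^{scc}$, which is what makes the equality $\mathcal{N}(G^{scc};S)=\mathcal{N}(G;S)$ hold exactly.
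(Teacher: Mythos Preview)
Your proof is correct and follows essentially the same approach as the paper: both arguments rely on Lemma~\ref{lemma6} together with the observation that branches between vertices of a common strongly connected component stay within that component, then finish with $\sigma(G)=\sigma(G^{scc})$ and Theorem~\ref{theorem1}. The only stylistic difference is that you carry out an explicit edge-by-edge weight comparison (via the closed-walk branch-containment argument), whereas the paper invokes Lemma~\ref{lemma6} twice, on $G$ and on $G^{scc}$, to conclude that $\mathcal{R}_S(G^{scc})$ and $\mathcal{R}_S(G)^{scc}$ have identical strongly connected components as subgraphs and no inter-component edges.
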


\begin{proof}
Let $G=(V,E,\omega)$ and $S\in st(G)$. As $G$ and $G^{scc}$ have the same strongly connected components then the fact that $S\in st(G^{scc})$ implies via lemma \ref{lemma6} that $\mathcal{R}_S(G)$ and $\mathcal{R}_S(G^{scc})$ have the same strongly connected components as well. Hence, the same is true of $\mathcal{R}_S(G^{scc})$ and $\mathcal{R}_S(G)^{scc}$.

Note that $\mathcal{R}_S(G^{scc})$ has the property that each of its edges is contained in some strongly connected component of this graph. This follows from the fact that every branch in $\mathcal{B}_S(G^{scc})$ is contained in some strongly connected component of $G^{scc}$ so lemma \ref{lemma6} implies there are no edges between the associated components in $\mathcal{R}_S(G^{scc})$. As there are no edges between strongly connected components in $\mathcal{R}_S(G^{scc})$ and $\mathcal{R}_S(G)^{scc}$ then these graphs are identical. 

To compare spectra note that both $\sigma(G)=\sigma(G^{scc})$ and $\sigma(\mathcal{R}_S(G))=$\\ $\sigma(\mathcal{R}_S(G))^{scc}$. Furthermore, an application of theorem \ref{theorem1} implies $\sigma(G^{scc})$ and $\sigma(\mathcal{R}_S(G^{scc}))$ differ at most by $\mathcal{N}(G;S)$ since every loop of $E$ is contained in $E^{scc}$. Similarly, as $\sigma(G)$ and $\sigma(\mathcal{R}_S(G))$ differ at most by $\mathcal{N}(G;S)$ the proof is complete.
\end{proof}

\section{Unique Reductions}

A typical graph will usually have many different structural sets which on the one hand adds flexibility to the process of graph reductions but on the other means some choice of structural set must be made before a graph can be reduced. In this section we consider a particular type of structural set that is defined independent of any specific graph. The advantage in this is foremost that every graph in $\mathbb{G}$ will have a reduction with respect to this particular type of vertex set. This will allow us not only to relate each graph's spectrum to this particular reduction but also to partition the graphs in $\mathbb{G}$ according to their reductions.

For a vertex $v$ let the \textit{out-degree} of this vertex be denoted by $d_{out}(v)$. For the graph $G=(V,E)$ define $$D_{out}(G)=\{v\in V:d_{out}(v)\geq 2\}.$$ 

\begin{definition}
We say a cycle of $G$ is a \textit{simple cycle} if it contains no vertices in $D_{out}(G)$. We call the set of vertices $D_{out}(G)$ together with the vertices on the simple cycles of $G$ the \textit{basic structural set} of $G$ and denote this by $bas(G).$
\end{definition}

To justify our use of the term \textit{basic structural set} we give the following proposition.

\begin{proposition}\label{prop4}
For $G\in\mathbb{G}$ the set $bas(G)\in st(G)$ where $\mathcal{N}(G,bas(G))=\{0\}$. Also the relation of having a reduction in common with respect to a graphs basic structural sets is an equivalence relation on $\mathbb{G}$.
\end{proposition}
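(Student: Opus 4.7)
The plan is to verify the structural-set axioms for $bas(G)$ first, and then to observe that the equivalence-relation claim follows formally once one notices that $bas(G)$ is canonically attached to $G$.

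For $bas(G) \in st(G)$, I would first check that $\overline{bas(G)}$ induces no cycles in $\ell(G)$. Suppose for contradiction that $C$ is a cycle in $\ell(G)$ whose vertices all lie in $\overline{bas(G)}$. By definition of $bas(G)$, no vertex of $C$ lies in $D_{out}(G)$, so $C$ is a cycle of $G$ containing no vertex of $D_{out}(G)$, i.e. a simple cycle. But then every vertex of $C$ belongs to $bas(G)$, contradicting our assumption. Next, to verify $\omega(e_{ii}) \neq \lambda$ for each $v_i \in \overline{bas(G)}$ and simultaneously to compute $\mathcal{N}(G, bas(G))$, I would argue that in fact $\omega(e_{ii}) = 0$, i.e., $v_i$ carries no loop. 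Indeed, if $e_{ii} \in E$, then the loop contributes to $d_{out}(v_i)$, and since $v_i \notin D_{out}(G)$ forces $d_{out}(v_i) \leq 1$, the loop must be the unique outgoing edge from $v_i$. But then the loop itself is a cycle at $v_i$ containing no vertex of $D_{out}(G)$, hence a simple cycle of $G$, which would place $v_i$ in $bas(G)$---a contradiction. So $\omega(e_{ii}) = 0$ for each $v_i \in \overline{bas(G)}$, which gives both $\omega(e_{ii}) \neq \lambda$ (so $bas(G) \in st(G)$) and $\mathcal{N}(G, bas(G)) = \{0\}$.

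For the equivalence relation statement, unwind the definition: write $G \sim H$ iff $\mathcal{R}_{bas(G)}(G) \simeq \mathcal{R}_{bas(H)}(H)$. Since $bas(\cdot)$ is a function of the underlying graph, and graph isomorphism $\simeq$ is itself an equivalence relation, reflexivity, symmetry and transitivity of $\sim$ reduce immediately to those of $\simeq$. It is worth remarking that the general relation ``having a reduction in common via \emph{some} structural sets'' failed to be transitive earlier in the paper precisely because one was free to choose different structural sets at each step; here that freedom is removed because $bas(G)$ is uniquely determined by $G$, which is exactly what repairs transitivity.

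The main obstacle, such as it is, lies in the loop analysis in Part 1, namely showing $\omega(e_{ii}) = 0$ for every $v_i \in \overline{bas(G)}$. One must handle loops carefully under the paper's convention that a loop at a vertex $v \notin D_{out}(G)$ is itself a simple cycle of $G$; without that reading of the definitions one could not force the absence of loops off $bas(G)$, and the asserted identity $\mathcal{N}(G, bas(G)) = \{0\}$ could fail. Everything else is essentially a formal unpacking of the definitions.
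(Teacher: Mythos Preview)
Your proposal is correct and follows essentially the same approach as the paper's proof: both argue that any cycle either contains a vertex of $D_{out}(G)$ or is simple, hence meets $bas(G)$, and both reduce the equivalence-relation claim to the fact that $bas(\cdot)$ is canonically determined so that $\sim$ inherits reflexivity, symmetry, and transitivity from $\simeq$. Your treatment of the loop condition is in fact more careful than the paper's, which simply asserts that no branch in $\mathcal{B}_{bas(G)}(G)$ carries a nonzero loop on an interior vertex; your explicit argument that a loop at $v_i\in\overline{bas(G)}$ would force $v_i$ onto a simple cycle (and hence into $bas(G)$) fills in the step the paper leaves implicit and correctly handles all of $\overline{bas(G)}$, not just vertices lying on branches.
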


\begin{proof}
For $G\in\mathbb{G}$ it follows that $bas(G)\in st(G)$ as every cycle of $G$ contains some vertex in $bas(G)$. This can be seen from the fact that every cycle of $G$ is or is not simple which in either case implies that the cycle must contain a vertex in $bas(G)$. Also, because no branch in $\mathcal{B}_{bas(G)}(G)$ contains a nonzero loop on an interior vertex then $\mathcal{N}(G,bas(G))=\{0\}$.

Next, if $G,H,K\in\mathbb{G}$ such that the pair $G$ and $H$ and the pair $H$ and $K$ have a reduction in common with respect to their basic structural sets then $\mathcal{R}_{bas(G)}(G)\simeq\mathcal{R}_{bas(H)}(H)\simeq\mathcal{R}_{bas(K)}(K)$ implying in particular that $\mathcal{R}_{bas(G)}(G)\simeq\mathcal{R}_{bas(K)}(K)$. Hence, having a common reductions with respect to a graphs regular vertices is a transitive relation on $\mathbb{G}$. From this one can also see that the relation is also reflexive and symmetric. 
\end{proof} 

We denote by $[G]$ the equivalence class of graphs in $\mathbb{G}$ having a common reduction with respect to their basic structural sets which contains the graph $G$. This will be of use in section 7.1.

It is also important to note that other criteria induce an equivalence relation on $\mathbb{G}$ or $\mathbb{G}_\pi$ all that is required is some rule that results in a unique reduction or sequence of reductions of any graph in $\mathbb{G}$ or $\mathbb{G}_\pi$ respectively. This can be summarized as follows.

\begin{theorem}\textbf{(Uniqueness and Equivalence Relations)}
Suppose for any graph $G=(V,E,\omega)$ in $\mathbb{G}_\pi$ that $\tau$ is a rule that selects a unique nonempty subset $\tau(V)\subseteq V$. Then $\tau$ induces an equivalence relation $\sim$ on the set $\mathbb{G}_\pi$ where $G\sim H$ if $\mathcal{R}_{\tau(V)}[G]\simeq\mathcal{R}_{\tau(W)}[H]$ where $V,$ $W$ are the vertex sets of $G$ and $H$ respectively.
\end{theorem}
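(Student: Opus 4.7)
The plan is to deduce the theorem almost directly from Theorem \ref{theorem-1} (Existence and Uniqueness), using isomorphism of weighted digraphs as the underlying equivalence.

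First I would verify that, for every $G=(V,E,\omega)\in\mathbb{G}_\pi$, the graph $\mathcal{R}_{\tau(V)}[G]$ is a well-defined object. Since $\tau$ is assumed to select a unique nonempty subset of $V$, Theorem \ref{theorem-1} guarantees the existence of some sequence of reductions ending at the vertex set $\tau(V)$, and that the terminal graph $\mathcal{R}_{\tau(V)}[G]$ is independent of the particular reducing sequence chosen. Thus the assignment $G\mapsto \mathcal{R}_{\tau(V)}[G]$ is a well-defined map from $\mathbb{G}_\pi$ into (isomorphism classes of) $\mathbb{G}$.

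Once this is established, checking that $\sim$ is an equivalence relation reduces to checking that graph isomorphism is one. For reflexivity, the identity map on $\mathcal{R}_{\tau(V)}[G]$ is an isomorphism, so $G\sim G$. For symmetry, if $G\sim H$ with witnessing isomorphism $\rho:\mathcal{R}_{\tau(V)}[G]\to\mathcal{R}_{\tau(W)}[H]$, then $\rho^{-1}$ witnesses $H\sim G$, since the inverse of an edge- and weight-preserving bijection is itself edge- and weight-preserving. For transitivity, if $G\sim H$ via $\rho$ and $H\sim K$ via $\sigma$, then the composition $\sigma\circ\rho$ witnesses $G\sim K$, as composition of isomorphisms is an isomorphism. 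This mirrors exactly the argument in Proposition \ref{prop4}, but with $bas(\cdot)$ replaced by the general selection rule $\tau$.

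The only genuine content, and thus the main obstacle, is the well-definedness step: without Theorem \ref{theorem-1} the object $\mathcal{R}_{\tau(V)}[G]$ would not generally make sense, since $\tau(V)$ need not be a structural set of $G$, and different sequences of reductions terminating at $\tau(V)$ could a priori yield different graphs. The hypothesis $G\in\mathbb{G}_\pi$ is precisely what is needed to invoke Theorem \ref{theorem-1}, so the theorem follows at once after that invocation. Everything else is a formal check that isomorphism is reflexive, symmetric, and transitive on $\mathbb{G}$.
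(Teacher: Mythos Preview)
Your proposal is correct and matches the paper's approach: the paper does not give an explicit proof of this theorem, treating it as an immediate consequence of Theorem~\ref{theorem-1} together with the fact that graph isomorphism is an equivalence relation, exactly as in the argument for Proposition~\ref{prop4}. Your identification of the well-definedness of $\mathcal{R}_{\tau(V)}[G]$ via Theorem~\ref{theorem-1} as the only substantive point is precisely right.
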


As an example consider the following rule. For a graph $G=(V,E,\omega)$ where $G\in\mathbb{G}_\pi$ let $m(V)\subseteq V$ be the set of vertices of minimal out degree. If $m(V)\neq V$ then by theorem \ref{theorem-1} $\mathcal{R}_{V\setminus m(V)}[G]$ is uniquely defined and this process may be repeated until all vertices of the resulting graph have the same out degree. As the vertex set $\tau(V)$ of the graph resulting from this sequence of reductions is unique then the relation of having an isomorphic reduction via this rule induces an equivalence relation on $\mathbb{G}_\pi$.

\subsection{Reductions Over Weight Sets}

Unlike the weight set $\mathbb{W}$ used in this paper, it is more typical to consider weighted digraphs $G$ having weights in some subset of $\mathbb{C}$. The tradeoff then for considering $\mathcal{R}_S(G)$ is that although the graph structure is simpler the weights become rational functions. In some sense the weights begin to take on the shape of the characteristic polynomial of $M(G)$. On the other hand, if we wish to reduce the size of the graph, i.e. number of vertices, while maintaining its spectrum along with a particular set of edge weights the following is possible. 

\begin{theorem}\label{theorem5}
Let $\mathbb{U}\subseteq\mathbb{W}$ be a unital subring and suppose $G\in\mathbb{G}$ has weights in $\mathbb{U}$. If $bas(G)=\{v_1,\dots,v_m\}$ and $\ell_i$ is the length of the longest branch in $\mathcal{B}_{ji}(G,bas(G))$ for all $1\leq j\leq m$ then there exists a graph $\mathcal{G}$ with the following properties:\\
(1) $\mathcal{G}\in[G]$ implying $\sigma(G)$ and $\sigma(\mathcal{G})$ differ at most by $\{0\}$.\\ 
(2) $\mathcal{G}$ has weights in $\mathbb{U}$.\\
(3) $\mathcal{G}$ has $m+\sum_{i=1}^m(\ell_i-1)$ vertices.
\end{theorem}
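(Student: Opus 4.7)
The plan is to construct $\mathcal{G}$ by sharing a single incoming ``spine'' among all branches of $G$ ending at each vertex of $bas(G)$, using the fact that $1\in\mathbb{U}$ to give every spine edge unit weight and thus keep every edge weight of $\mathcal{G}$ inside $\mathbb{U}$. This trades the simplicity of $\mathcal{R}_{bas(G)}(G)$'s rational weights for a slightly larger but still explicit graph whose weights remain in $\mathbb{U}$.

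Concretely, for each $v_i\in bas(G)=\{v_1,\dots,v_m\}$ I would introduce fresh vertices $u^{(i)}_1,\dots,u^{(i)}_{\ell_i-1}$ and directed edges $u^{(i)}_1\to u^{(i)}_2\to\cdots\to u^{(i)}_{\ell_i-1}\to v_i$, each carrying weight $1$. For every branch $\beta\in\mathcal{B}_{ji}(G;bas(G))$ of length $k$ (number of edges, with $1\le k\le\ell_i$), add a single edge from $v_j$ to $u^{(i)}_{\ell_i-k+1}$ (to $v_i$ itself when $k=1$) whose weight is the product of the edge weights of $\beta$; if several branches of equal length between $v_j$ and $v_i$ attach at the same spine vertex, sum their weights per the no-parallel-edges convention. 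Call the resulting digraph $\mathcal{G}$. Property~(3) is immediate from counting, and property~(2) holds since every weight in $\mathcal{G}$ is either the unit $1$ or a finite sum of products of weights of $G$, hence in $\mathbb{U}$. For the spectral half of~(1), note that $bas(G)\in st(\mathcal{G})$ because its complement in $\mathcal{G}$ is a disjoint union of spine paths with no loops, and $\mathcal{N}(\mathcal{G};bas(G))=\{0\}$; the same holds for $G$ by Proposition~\ref{prop4}. Since the interior loops on every branch over $bas(G)$ in both $G$ and $\mathcal{G}$ are zero, the branch product of the path $v_j\to u^{(i)}_{\ell_i-k+1}\to\cdots\to v_i$ in $\mathcal{G}$ equals $\bigl(\prod_{e\in\beta}\omega(e)\bigr)\big/\lambda^{k-1}=\mathcal{P}_\omega(\beta)$. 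Summing over branches between each pair of $bas(G)$-vertices gives $\mathcal{R}_{bas(G)}(\mathcal{G})=\mathcal{R}_{bas(G)}(G)$, so Theorem~\ref{theorem1} applied to both graphs over $bas(G)$ shows $\sigma(G)$ and $\sigma(\mathcal{G})$ differ at most by $\{0\}$.

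The remaining task is to promote this to $\mathcal{G}\in[G]$ in the strict sense, i.e.\ $\mathcal{R}_{bas(\mathcal{G})}(\mathcal{G})\simeq\mathcal{R}_{bas(G)}(G)$. Since spine vertices have out-degree one and lie on no cycle of $\mathcal{G}$, one immediately has $bas(\mathcal{G})\subseteq bas(G)$, and every vertex of $bas(G)$ lying on a simple cycle of $G$ lies on a simple cycle of $\mathcal{G}$ (such cycles consist entirely of length-one branches between $bas(G)$-vertices and transfer verbatim). The subtle case is a vertex $v_j\in D_{out}(G)$ whose out-degree in $\mathcal{G}$ drops to $1$ because two branches from $v_j$ to a common $v_i$ had the same length and were merged at a common spine vertex. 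This is the principal obstacle of the proof, and one resolves it either by a harmless refinement of the attachment rule that separates colliding branches without inflating the vertex count, or by appealing to the commutativity of sequential reductions (Theorem~\ref{theorem3}) to check that any further contraction from $\mathcal{R}_{bas(G)}(\mathcal{G})$ down to $\mathcal{R}_{bas(\mathcal{G})}(\mathcal{G})$ acts as an isomorphism on the adjacency matrix. Once this bookkeeping is settled, properties~(1)--(3) all hold as stated.
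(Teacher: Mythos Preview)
Your construction is the paper's construction: for each $v_i\in bas(G)$ build an incoming ``spine'' of $\ell_i-1$ fresh vertices with unit-weight edges, and for every branch $\beta\in\mathcal{B}_{ji}(G;bas(G))$ of length $k$ attach a single edge of weight $\prod_{e\in\beta}\omega(e)$ from $v_j$ into the spine at depth $\ell_i-k+1$, merging parallels. Your verification of (2), (3), and the spectral half of (1) via $\mathcal{R}_{bas(G)}(\mathcal{G})=\mathcal{R}_{bas(G)}(G)$ is exactly what the paper does, only spelled out more carefully.

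The last paragraph, however, contains a genuine error. You claim that spine vertices ``lie on no cycle of $\mathcal{G}$'' and deduce $bas(\mathcal{G})\subseteq bas(G)$, but both assertions can fail. Take $G$ on $\{v_1,w_1,w_2,v_2\}$ with unit-weight edges $v_1\!\to\! w_1\!\to\! v_2$, $v_1\!\to\! w_2\!\to\! v_2$, and $v_2\!\to\! v_1$. Then $D_{out}(G)=\{v_1\}$, there are no simple cycles, so $bas(G)=\{v_1\}$; both branches from $v_1$ to itself have length $3$, so $\ell_1=3$ and your $\mathcal{G}$ is the $3$-cycle $v_1\!\to\! u_1\!\to\! u_2\!\to\! v_1$ with weights $2,1,1$. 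Here $D_{out}(\mathcal{G})=\emptyset$, the entire cycle is simple, and $bas(\mathcal{G})=\{v_1,u_1,u_2\}\supsetneq bas(G)$. So the spine vertices \emph{do} sit on a simple cycle, your containment points the wrong way, and $\mathcal{R}_{bas(\mathcal{G})}(\mathcal{G})=\mathcal{G}$ is not isomorphic to the one-vertex graph $\mathcal{R}_{bas(G)}(G)$. Neither of your proposed fixes touches this: there is no collision of attachment points to separate, and since $bas(\mathcal{G})\supseteq bas(G)$ there is no ``further contraction'' for Theorem~\ref{theorem3} to control.

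To be fair, the paper's own proof simply declares $\{v_1,\dots,v_m\}=bas(\mathcal{G})$ at the outset and then asserts a bijection of branches over $bas(\mathcal{G})$ without checking either claim; the example above shows this is not merely a missing sentence. The spectral conclusion in (1) does survive in the example (one loses only the eigenvalue $0$), and your argument for that part is sound; it is the literal membership $\mathcal{G}\in[G]$ that neither you nor the paper actually establishes.
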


\begin{proof}
We prove the theorem by explicitly constructing the graph $\mathcal{G}$ with the required properties. 

To do so, let $G=(V,E,\omega)$ where $G$ has weights in $\mathbb{U}$ some unital subring of $\mathbb{W}$. For $bas(G)=\{v_1,\dots,v_m\}$ let $$\mathcal{B}_i(G)=\bigcup_{1\leq j\leq m}\mathcal{B}_{ji}(G;bas(G))$$ and let $\gamma^i\in\mathcal{B}_i(G)$ be a branch of maximal length for all $1\leq i\leq m$. Furthermore, let $\Gamma=\{\gamma^1,\dots,\gamma^m\}$ and note each $\gamma^i$ is given by the sequence of vertices $\gamma^i(0),\gamma^i(1),\dots,\gamma^i({\ell_i})$ where $\ell_i$ is its length. We construct $\mathcal{G}$ via the following.

First, let $\mathcal{G}=(\mathcal{V},\mathcal{E},\mu)$ where $\{v_1,\dots,v_m\}=bas(\mathcal{G})\subseteq\mathcal{V}$ and each $\gamma^i\in\mathcal{B}_{bas(\mathcal{G})}(\mathcal{G})$ for $i\leq m$. Moreover, each $\gamma^i$ is independent of each $\gamma^j$, $i\neq j$.

If $e_{j,j+1}^i$ is the edge from $\gamma^i(j)$ to $\gamma^i(j+1)$ then set 
$$\mu(e_{j-1,j}^i)=\begin{cases}
                    \prod_{k=1}^{n_i}\omega(e_{k-1,k}^i) \ \text{for} \ j=1\\
                   1, \ \text{otherwise}
                   \end{cases}$$
and note that each $\mu(e_{j-1,j}^i)\in \mathbb{U}$. 

Lastly, for each $\beta\in\mathcal{B}_{ij}(G;bas(G))\setminus \Gamma$, if $\beta$ has length $n$ then add an edge $\beta_e$ from $v_i$ to $\gamma^j(n_j+1-n)$ to the edge set $\mathcal{E}$. Make $\mu(\beta_e)=\prod_{k=1}^{l}\omega_k$ where $\Omega(\beta)=\omega_1,0,\omega_2\dots,\omega_{l-1},0,\omega_l$ is the weight sequence of $\beta$. If multiple edges of $\mathcal{G}$ are reduced to single edges by summing the corresponding weights then $\mathcal{G}$ has the following properties.

The vertex set $\mathcal{E}$ consists of all distinct vertices in $\Gamma$ of which there are $m+\sum_{i=1}^m(\ell_i-1)$. Moreover, each weight of $\mathcal{G}$ is the product or sum of products of elements of $\mathbb{U}$ implying $\mathcal{G}$ has weights in this set. Finally, there is a one-to-one correspondence between branches in $\mathcal{B}_{ij}(G,bas(G))$ and $\mathcal{B}_{ij}(\mathcal{G};bas(\mathcal{G}))$ where corresponding branches have the same branch product. Hence, $\mathcal{G}\in [G]$.
\end{proof}

\begin{figure}
  \begin{center}
    \begin{overpic}[scale=.5]{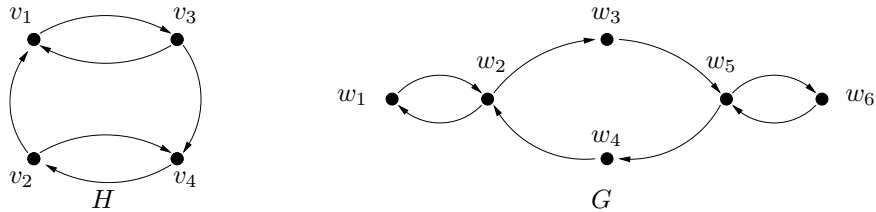}
    \put(10,-3){$H$}
    \put(0,0){$v_2$}
    \put(0,20){$v_1$}
    \put(20,20){$v_3$}
    \put(20,0){$v_4$}
    \put(71,-3){$G$}
    \put(40,10){$w_1$}
    \put(57,14){$w_2$}
    \put(71,20){$w_3$}
    \put(71,5){$w_4$}
    \put(85,14){$w_5$}
    \put(102,10){$w_6$}
    \end{overpic}
  \end{center}
  \caption{$H$ is a reduction of $G$ over the weight set $\{1\}$.}
\end{figure} 

An example of this construction is the graph $H$ in figure 8 which is constructed from the graph $G$ (in the same figure) over the weight set $\mathbb{U}=\{1\}$. $H$ is a reduction over the weight set $\mathbb{U}=\{1\}$ of $G$ in the sense that it has fewer vertices than the graph $G$ from which it is constructed. Furthermore, one can compute $\sigma(G)=\{\pm\sqrt{2},0,0,0,0\}$ and $\sigma(H)=\{\pm\sqrt{2},0,0\}$. 

A more complicated problem involves finding the graph $H\in[G]$ with the least number of vertices where both $H$ and $G$ have weights in some set $\mathbb{U}\subseteq\mathbb{W}$.

\section{Concluding Remarks}
The main results of this paper are concerned with the way in which the structure of a graph influences the spectrum of the graph's adjacency matrix and to what extent this spectrum can be maintained if this structure is simplified. For the most part these results give algorithmic methods whereby a graph can be reduced in size but do not mention if such methods might be useful in determining the spectrum of a given graph. 

As is shown in \cite{BW09}, the graph reductions considered here do indeed help in the estimation of a graph's spectrum. For example, it is possible to extend the eigenvalue estimates given by the classical result of Gershgorin \cite{Gershgorin31} to matrices with entries in $\mathbb{W}$. The main results of \cite{BW09} is that eigenvalue estimates via this extension improve as the graph is reduced. Analogous results and extensions also hold for the work done by Brauer, Brualdi and Varga \cite{Brauer47,Brualdi82,Hadamard03,Horn85,Varga09}whose original results are each improvements of Gershgorin's. Gershgorin's original result is in fact equivalent to a nonsingularity result for diagonally dominant matrices (see theorem 1.4 \cite{Varga09}) which can be traced back to earlier work done by L\'{e}vy, Desplanques, Minkowski, and Hadamard \cite{Levy81,Desplanques87,Minkowski00,Hadamard03}. 

Importantly, we note that via our method of graph reductions we obtain better estimates than those given by all of the previous existing methods. Moreover, graph reductions can be used to obtain estimates of the spectrum of a matrix with increasing precision depending on how much one is willing to reduce the associated graph. If the graph is completely reduced the corresponding eigenvalue estimates give the exact spectum of the matrix along with some finite set of points. 

These techniques can furthermore be used for the estimation of spectra for combinatorial and normalized Laplacian matrices as well as giving bounds on the spectral radius of a given matrix. In fact it is in such applications that the flexibility of isospectral graph reductions is particularly useful.

The results of the present paper demonstrate various approaches to simplifying a graph's structure while maintaining its spectrum. Therefore, these techniques can be used for optimal design, in the sense of structure simplicity of dynamical networks with prescribed dynamical properties ranging from synchronizability to chaoticity \cite{Afriamovich07, Blank06}. 

\section{Acknowledgments}
We would like to thank C. Kirst and M. Timme for valuable comments and discussions. This work was partially supported by the NSF grant DMS-0900945 and the Humboldt Foundation.


\begin{thebibliography}{9}
\bibitem{Afriamovich07} V. S. Afriamovich, L. A. Bunimovich, Dynamical networks: interplay of topology, interactions, and local dynamics, \textit{Nonlinearity} \textbf{20}, 1761-1771 (2007)

\bibitem{Albert02} R. Albert, A-L. Barab\'{a}si, \emph{Statistical mechanics of complex networks}, Rev. Mod. Phys.\textbf{74}, 47-97 (2002).

\bibitem{Alekseyev81} V. M. Alekseyev, Symbolic Dynamics, Translations of the AMS(Ser.2) vol. 116, Providence, RI: AMS, 1-113,1981.

\bibitem{Blank06} M. L. Blank, L. A. Bunimovich, Long range action in networks of chaotic elements, \textit{Nonlinearity} \textbf{19}, 329-344 (2006). 

\bibitem{Boyle02} M.Boyle and D.Lind, Small polynomial matrix presentations of nonnegative matrices, Linear Algebra Applications
355 (2002), 49-70.

\bibitem{Brauer47} A. Brauer, Limits for the characteristic roots of a matrix II, Duke Math J. 14, 21-26 (1947).

\bibitem{Brualdi82} R. Brualdi, Matrices, Eigenvalues, and Directed Graphs, Lin. Multilin. Alg. 11 143-165 (1982).

\bibitem{Brualdi91} R. Brualdi and H. Ryser, \emph{Combinatorial Matrix Theory}, Cambridge University Press, Melbourne, 1991.

\bibitem{BW09} L. A. Bunimovich, B. Z. Webb, Dynamical Networks, Isospectral Reductions, and Generalization of Gershgorin's Theorem, preprint (2009).

\bibitem{Chazottes05} J-R. Chazottes, B. Fernandes (ed.), \emph{Dynamics of Coupled Map Lattices and Related Spatially Extended Systems}, Lect. Notes in Physics, v. 671, Berlin: Springer, 2005. 

\bibitem{Chua02} L. Chau (ed), \emph{Synchronization in Coupled Chaotic Circuts and Systems}, vol. 41, World Scientific Series on Nonlinear Sciences, Series A, World Scientific, Singapore, 2002.

\bibitem{Chung97} F. R. K. Chung, \emph{Spectral Graph Theory}, American Mathematical Society, (1997).

\bibitem{Chung06} F. Chung and L. Lu, \emph{Complex Graphs and Networks}, Providence, RI: American Mathematical Society, 2006.

\bibitem{Desplanques87} J. Desplanques, Th\'{e}or\`{e}me d'alg\'{e}bre, J. Math. Sp\'{e}c. (3) 1(1887), 12-13.

\bibitem{Dorogovtsev03} S. N. Dorogovtsev, J.F.F. Mendes, \emph{Evolution of Networks: From Biological Networks to the Internet and WWW}, Oxford: Oxford Univ. Press, 2003.

\bibitem{Faloutsos99} M. Faloutsos, P. Faloutsos, C. Faloutsos, \emph{On power-law relationship of the internet topology}, ACMSIGCOMM,`99, Comput. Commun. Rev. \textbf{29}, 251-263 (1999).

\bibitem{Gershgorin31} S. Gershgorin, \"{U}ber die Abgrenzung der Eigenwerte einer Matrix, Izv. Akad. Nauk SSSR Ser. Mat. 1, 749-754 (1931). 

\bibitem{Hadamard03} J. Hadamard, Le\c{c}ons sur la propagation des ondes, Hermann et fils, Paris, (1949) reprinted by Chelsea, New York.

\bibitem{Horn85} R. A. Horn, C. R. Johnson, \emph{Matrix Analysis}, Cambridge University Press, Cambridge.

\bibitem{Levy81} L. Levy, Sur la possibilit\'{e} de l'\'{e}quilibre \'{e}lectrique, Comptes Rendus C. R. Acad. Sci. Paris 93 (1881) 706-708. 

\bibitem{Minkowski00} H. Minkowski, Zur Theorie der Einheiten in den algebraischen Zahlk\"{o}rpern, G\"{o}ttinger Nachr. (1900), 90-93. (Ges. Abh. 1, 316-317). Diophantische Approximationen (Leipzig, 1907), 143-144.

\bibitem{Motter07} A. E. Motter, Bounding network spectra for network design, New J. of Physics \textbf{9}, 2-17 (2007).

\bibitem{Newman06} M. Newman, A-L. Barab\'{a}si, D. J. Watts (ed), \emph{The Structure of Dynamic Networks}, Princeton: Princeton Univ. Press, 2006.

\bibitem{Strogatz03} S. Strogatz, \emph{Sync: The Emerging Science of Spontaneous Order}, New York: Hyperion, 2003.  

\bibitem{Varga09} R. S. Varga, \emph{Gershgorin and His Circles}, Germany: Springer-Verlag Berlin Heidelberg, 2004.

\bibitem{Watts99} D. J. Watts, \emph{Small Worlds: The Dynamics of Networks Between Order and Randomness}, Princeton: Princeton Univ. Press, 1999.

\bibitem{Wu05} C. W. Wu, Synchronization in networks of nonlinear dynamical systems coupled via a direct graph, \textit{Nonlinearity} \textbf{18}, 1057-1064 (2005).

\end{thebibliography}
\end{document}